\begin{document}
\title
[Experiments on the Brauer map in high codimension]
{Experiments on the Brauer map in high codimension}

\author{Siddharth Mathur}
\address{Mathematisches Institut\\Heinrich-Heine-Universit\"at\\40204 D\"usseldorf, Germany.}
\email{siddharth.p.mathur@gmail.com}

\begin{abstract}

\noindent Using twisted sheaves, formal-local methods, and elementary transformations we show that separated algebras spaces which are constructed as pushouts or contractions (of curves) have enough Azumaya algebras. This implies: (1) Under mild hypothesis, every cohomological Brauer class is representable by an Azumaya algebra away from a closed subset of codimension $\geq 3$, generalizing an early result of Grothendieck and, (2) $\mathrm{Br}(X)=\mathrm{Br}'(X)$ when $X$ is an algebraic space obtained from a quasi-projective scheme by contracting a curve. This result is valid for all dimensions but if we specialize to surfaces, it solves the question entirely: there are always enough Azumaya algebras on separated surfaces.

\end{abstract}

	\maketitle
	\tableofcontents

\section{Introduction}
In his original treatises on the Brauer group \cite{Brauer2}, Grothendieck posed the question: does every torsion class in $\H^2(X, \mathbf{G}_m)$ arise from an Azumaya algebra or a Brauer-Severi scheme? That is, do the geometric, $\Br(X)$, and cohomological, $\Br'(X)$, Brauer groups agree? One can show that $\Br(X) \subset \Br'(X)$ always holds, so the difficulty lies in showing surjectivity. This has proven to be quite a challenge and remains an open problem in general.

Some of the early progress is due to Grothendieck: he showed that on any regular noetherian scheme, every cohomological Brauer class is representable by an Azumaya algebra away from a set of codimension $\geq 3$. Another early result is due to Hoobler, who proved that there are enough Azumaya algebras on abelian varieties (see \cite{hoobler1972brauer}). However, the result with perhaps the most utility is an (unpublished) Theorem of Gabber: any cohomological Brauer class on a scheme $X$ is geometric as long as $X$ admits an ample line bundle (see \cite{dejongample} for a different proof due to de Jong). 

As an important precursor to this result, Gabber proved that any scheme which is the separated union of two affine open subschemes always has enough Azumaya algebras (see Gabber's thesis \cite[Chapter 2, Theorem 1]{gabber1981some}). His ideas refine an argument of Hoobler, who proved a similar result but with finiteness and regularity hypothesis (see \cite{hoobler1980cohomological}). Extending these arguments (or finding a counterexample) is an important problem and a general solution has remained out of reach for almost 50 years. Outside of the quasi-projective setting very little is known and it is difficult to know what to expect. Indeed, consider the contrasting results:
\begin{enumerate}
\item If $X$ is a two-dimensional algebraic space which is geometrically normal, separated and finite-type over a field $k$ then $\mathrm{Br}(X)=\mathrm{Br}'(X)$ (Schr\"oer \cite{enoughAzumaya}) 
\item There is a (non-separated) normal two-dimensional scheme which is finite-type over a field $k$ with $\Br(X) \neq \Br'(X)$. (Edidin, Hassett, Kresch and Vistoli \cite{EHKV})
\end{enumerate} 

\noindent To appreciate how little we know, note that the question is open even when $M$ is a smooth separated threefold over the complex numbers.

Using the K-theory of commutative rings and formal glueing techniques, we are able to refine the result of Gabber and Hoobler:

\begin{Theorem} \label{T1tubular} Let $X$ be an algebraic space with affine diagonal. Suppose that $X$ admits an open immersion $j: \Spec A \to X$ in such a way that there is an affine flat neighborhood $f: \Spec B=\widehat{Z} \to X$ of $Z=(X \backslash \Spec A)_{\mathrm{red}}$, then $\Br(X)=\Br'(X)$. 
\end{Theorem}

Indeed, any scheme which is the separated union of two affine open subschemes satisfies the hypothesis of the aforementioned theorem. Then, by using the theory of Tannaka duality (as in \cite{hall2019coherent}) one can readily produce affine flat neighborhoods and as a corollary, we are also able to improve Grothendieck's early result:

\begin{corollary} \label{T1} Let $X$ be a separated algebraic space which is noetherian and has a dense open subset consisting of regular points (e.g. a generically reduced variety). Then, given any cohomological Brauer class $\alpha \in \mathrm{Br}'(X)$ there is an open subset $U \subset X$ with $\codim X \backslash U \geq 3$ such that $\alpha|_U \in \mathrm{Br}(U)$ is representable by an Azumaya algebra. 
\end{corollary}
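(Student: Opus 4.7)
The statement amounts to proving that, for every point $x \in X$ with $\dim \mathcal{O}_{X,x} \leq 2$, there is a Zariski open neighborhood $W \ni x$ on which $\alpha|_W \in \Br(W)$. Granting this local claim, the union $U$ of all such $W$ as $x$ ranges over codimension-$\leq 2$ points has $\codim X \backslash U \geq 3$ and realizes $\alpha$ by an Azumaya algebra there. So the whole task reduces to constructing such a $W$ around each codimension-$\leq 2$ point.

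To build $W$, the plan is to invoke Theorem \ref{T1tubular}. Since $X^{\mathrm{reg}}$ is dense and meets every generic point of $X$ specializing to $x$, and since $X$ is noetherian and separated, I will shrink $X$ to an open $W \ni x$ and choose an affine open $j: \Spec A \hookrightarrow W$ with $\Spec A \subset X^{\mathrm{reg}}$ dense in $W$, so that $Z = (W \setminus \Spec A)_{\mathrm{red}}$ is a closed subspace containing $x$ (essentially the singular locus of $W$ together with the closure of $x$). What remains is to produce an affine flat neighborhood $f : \Spec B = \widehat{Z} \to W$ of $Z$. Following the paragraph preceding the corollary, this step is carried out via the Tannaka duality machinery of Hall--Rydh \cite{hall2019coherent}: the formal completion $\widehat{W}_Z$ is a noetherian formal algebraic space with affine diagonal, and under the present hypotheses it can be algebraized to an honest affine flat $W$-scheme $\Spec B$ inducing an isomorphism on completions along $Z$.

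With $W$, $\Spec A$, and $\Spec B$ assembled, Theorem \ref{T1tubular} yields $\Br(W) = \Br'(W)$, whence $\alpha|_W$ is representable by an Azumaya algebra, completing the local claim. The main obstacle is the algebraization step, i.e.\ realizing $\widehat{W}_Z$ as $\Spec B \times_W \widehat{W}_Z$ for some affine flat $W$-scheme $\Spec B$. The density of $X^{\mathrm{reg}}$ does double duty in overcoming it: it forces $W \setminus Z$ into the regular locus so that $\Spec A$ can be arranged affine, and it controls the geometry along $Z$ enough for the Tannaka formalism to apply. Some additional care is needed when arranging $\Spec A$ to be affine in the case that $X$ is a genuine algebraic space rather than a scheme, which is where the affine diagonal hypothesis of Theorem \ref{T1tubular}, inherited from the separatedness of $X$, enters essentially.
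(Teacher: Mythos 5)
There is a genuine gap, and it occurs at the very first step. You claim that if for each codimension $\leq 2$ point $x$ there is an open $W_x \ni x$ with $\alpha|_{W_x} \in \mathrm{Br}(W_x)$, then $\alpha|_U \in \mathrm{Br}(U)$ for $U = \bigcup_x W_x$. Membership in the geometric Brauer group is \emph{not} a Zariski-local condition that passes to unions: an Azumaya algebra representing $\alpha$ on each piece of a cover need not glue to one on the union (the ranks and the bundles themselves need not match on overlaps), and this failure of locality is precisely the content of Grothendieck's question. Indeed, Gabber's theorem already gives $\mathrm{Br}=\mathrm{Br}'$ on every affine open, so if your union step were valid the full surjectivity problem for schemes would be solved immediately. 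The paper avoids this entirely: it constructs a \emph{single} open subspace $Y = \Spec A \cup W$ containing all the \emph{singular} codimension $\leq 2$ points, where $\Spec A$ is a dense affine open chosen (via the Chevalley--Kleiman property in codimension $\leq 1$, Proposition \ref{CKincodim1}) to contain the generic points of the codimension-$1$ components of $\mathrm{Sing}(X)$, and $W$ is a single further open whose trace on $(X\backslash \Spec A)_{\mathrm{red}}$ is an affine neighborhood of the finitely many remaining bad codimension-$2$ points. One application of Corollary \ref{prop:stratifyBrauer} then gives one twisted vector bundle on $\sX|_Y$.

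A second, related omission: your opens would have to cover the \emph{regular} codimension $\leq 2$ points as well, but around a codimension-$2$ point of an algebraic space there need be no affine (or even schematic) neighborhood, so your construction of $W$ with a dense affine $\Spec A$ and an algebraizable completion along $Z$ is not justified there; note also that Proposition \ref{prop:flathood} requires the closed subspace $Z$ to be \emph{affine}, which you never arrange (your $Z$ contains the whole singular locus of $W$ and can be positive-dimensional). The paper sidesteps the regular points by a different mechanism: it extends the twisted vector bundle from $Y$ coherently to all of $X$, passes to the reflexive hull, and invokes the Auslander--Buchsbaum formula to get local freeness at regular codimension $\leq 2$ points. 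Some version of this extension argument, or else a genuinely new gluing idea, is unavoidable; as written the proposal does not prove the corollary.
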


If there is any chance of finding a cohomology class which isn't geometric one needs to consider algebraic spaces which are not quasi-projective. Moreover, since we now know Brauer classes are generically geometric we would like to study their geometricity near closed points of high codimension. Thus, we need to find higher dimensional examples of algebraic spaces which are not quasi-projective but which also have plenty of cohomological Brauer classes. Towards this end, we study algebraic spaces $X$ which arise from quasi-projective schemes $Y$ by contracting subvarieties $S \subset Y$ to a point. Constructing contractions in the category of schemes is a very difficult problem, but by results of Artin (see \cite{artincontraction}) one may always contract \emph{rigid} subvarieties in the category of algebraic spaces. 

In fact, not only do such contractions rarely have ample line bundles, they are rarely schemes. We will show that torsion line bundles defined on an \'etale tubular neighborhood of $S$ give rise to cohomological Brauer classes on $X$, which are also often nontrivial. To ensure there are plenty of torsion line bundles on an \'etale tubular neighborhood of $S$ we will consider the case when $S$ is a $1$-dimensional scheme. Indeed, in this setting every line bundle on $S$ extends to a \'etale tubular neighborhood. Using this strategy, we will give examples of non-schematic algebraic spaces in all dimensions $\geq 3$ whose cohomological Brauer group is not finitely generated (see Proposition \ref{examples}).

Unfortunately, this strategy will not produce counterexamples. In fact, we are able to prove that the geometric and cohomological Brauer groups must coincide in this setting:

\begin{Theorem} \label{T2} Let $X$ be a separated algebraic space which is of finite-type over a field $k$. If there is a proper morphism $f: Y \to X$ such that
\begin{enumerate}
\item $f$ is an isomorphism away from finitely many points, with
\item $Y$ quasi-projective over $k$, and
\item the exceptional locus of $f$ is $1$-dimensional.
\end{enumerate}
then $\mathrm{Br}(X)=\mathrm{Br}'(X)$.
\end{Theorem}

This result yields the first examples of algebraic spaces $X$ in all dimensions which $(1)$ are not schemes, and $(2)$ satisfy $\mathrm{Br}(X)=\mathrm{Br}'(X)$. Even if we specialize to the case when $\dim X=2$ we obtain a generalization of Schr\"oer's result:

\begin{corollary} \label{surfaces} Let $X$ be a two-dimensional algebraic space which is of finite-type and separated over a field $k$, then $\mathrm{Br}(X)=\mathrm{Br}'(X)$.
\end{corollary}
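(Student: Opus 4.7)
The plan is to reduce the corollary to Theorem \ref{T2} by producing an appropriate proper birational $f: Y \to X$ from a quasi-projective scheme. Since both $\mathrm{Br}$ and $\mathrm{Br}'$ depend only on the reduced structure, I first reduce to the case that $X$ is reduced.

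Next, I invoke the fact that a two-dimensional separated Noetherian algebraic space agrees with a scheme in codimension $\leq 1$. Consequently the non-schematic locus is closed of codimension $\geq 2$, hence a finite set of closed points $S = \{x_1, \ldots, x_n\}$. The open complement $U := X \setminus S$ is then a two-dimensional reduced separated scheme of finite type over $k$. By Nagata's compactification theorem combined with the classical fact that every proper surface over a field is projective (Zariski/Goodman), $U$ embeds as an open subscheme of a projective surface, so $U$ is quasi-projective over $k$.

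I then apply a form of Chow's lemma for algebraic spaces (Raynaud--Gruson/Knutson) that respects the given open scheme: there exists a proper birational morphism $f: Y \to X$ with $Y$ a quasi-projective scheme over $k$ such that $f$ restricts to an isomorphism over $U$. In particular $f$ is an isomorphism away from the finite set $S$, verifying hypothesis (1) of Theorem \ref{T2}, while $Y$ quasi-projective gives hypothesis (2). For hypothesis (3), the exceptional locus $f^{-1}(S) \subset Y$ is a proper closed subscheme of the two-dimensional space $Y$ with dense complement, so has dimension $\leq 1$ (the dimension cannot reach $2$, as otherwise an irreducible component of $Y$ would contract to a point, contradicting birationality). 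Applying Theorem \ref{T2} yields $\mathrm{Br}(X) = \mathrm{Br}'(X)$.

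The main obstacle is invoking the correct refinement of Chow's lemma: one must produce $f: Y \to X$ with $Y$ quasi-projective and $f$ an isomorphism over the prescribed scheme-theoretic open $U$, rather than merely over \emph{some} dense open. This is by now standard for separated algebraic spaces, but it is the point where the algebraic-space setup meets the quasi-projective scheme hypothesis needed by Theorem \ref{T2}. A secondary subtlety is the structural input that the non-schematic locus of a 2-dim separated finite-type algebraic space over a field is finite, which rests on the codimension-$\leq 1$ schematicity of separated Noetherian algebraic spaces.
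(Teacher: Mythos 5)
Your overall strategy --- find a large quasi-projective open subset of $X$, take an admissible blow-up $Y \to X$ via Chow's lemma, and feed the result into Theorem \ref{T2} --- is exactly the paper's, and your verifications of hypotheses (1) and (3) are fine. But there is a genuine gap in the middle. You claim the schematic locus $U$ (a two-dimensional separated finite-type scheme) is quasi-projective because, after a Nagata compactification, ``every proper surface over a field is projective.'' That assertion is false beyond the nonsingular (or $\mathbf{Q}$-factorial) case: Zariski--Goodman projectivity is a theorem about \emph{smooth} complete surfaces, and there exist proper \emph{normal} two-dimensional schemes over a field that are not projective (examples of Schr\"oer). Such a surface is its own schematic locus, so your argument would take $S=\emptyset$, $U=X$, and wrongly conclude that $X$ is quasi-projective. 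This is precisely why the paper does not use $U$ itself but invokes \cite[Theorem 1.5]{SurfaceRes} to shrink further to a quasi-projective open $V \subset U$ whose complement in $X$ is still finite; the ``all but finitely many closed points'' slack is essential and is not something Nagata compactification plus classical surface projectivity provides.

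A secondary issue is the opening reduction to $X$ reduced on the grounds that $\Br$ and $\Br'$ ``depend only on the reduced structure.'' This is not known in general (and is exactly the difficulty the paper's subsection on non-reduced phenomena is devoted to: the kernel $1+\mathcal{N}$ of $\mathbf{G}_{m,X} \to \mathbf{G}_{m,X_{\mathrm{red}}}$ has quasi-coherent graded pieces whose $\H^2$ need not vanish on a non-affine space, and Azumaya algebras need not lift across thickenings). Fortunately the reduction is also unnecessary: the codimension-$\leq 1$ schematicity statement, the quasi-projectivity theorem, and Theorem \ref{T2} itself all apply without a reducedness hypothesis. Dropping that step and replacing the Nagata argument by the cited quasi-projectivity result recovers the paper's proof.
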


\noindent thereby settling the question for separated surfaces.
\\

\noindent \textbf{Acknowledgments} I am indebted to my advisor Max Lieblich for suggesting this problem to me and for his careful guidance. Many thanks go to Jarod Alper and Aise Johan de Jong for their helpful suggestions and encouragement. I am grateful to Jack Hall for carefully reading an earlier version of this paper and for his comments and ideas, Lemma \ref{primetopthick} is essentially due to him. I would like to thank Robert Lazarsfeld for suggesting the example of hypersurfaces. I also benefitted from conversations with Lucas Braune, Andrew Kresch, Jake Levinson, David Rydh, Stefan Schr\"oer, Minseon Shin, and Angelo Vistoli. I would also like to thank the referee for carefully reading an earlier version of this article and for their helpful suggestions. A part of this research was conducted in the framework of the research training group \emph{GRK 2240: Algebro-geometric Methods in Algebra, Arithmetic and Topology}, which is funded by the DFG.
\section{The Surjectivity of the Brauer Map}

\subsection{Grothendieck's Question}
We begin by briefly reviewing the theory of the Brauer group with a focus on its relationship to gerbes and twisted sheaves. Let $X$ denote an algebraic stack. We work throughout with the (big) fppf site on $X$.

\begin{definition} \label{DefCohoBrauer} The \emph{cohomological Brauer group} of $X$ is $\H^2(X,\mathbf{G}_m)_{\text{tors}}$, we denote it by $\Br'(X)$.
\end{definition}

\begin{proposition} \label{PGLpres} For each $n \geq 2$, the two rows of the commutative diagram form exact sequences in the category of sheaves of groups on $X$

\[\begin{tikzcd}
0 \arrow{r} & \mathbf{\mu}_n \arrow{r} \arrow{d}{\ker_n} & \mathbf{SL}_n \arrow{r} \arrow{d} & \mathbf{PGL}_n \arrow{r} \arrow{d}{\mathrm{id}} & 0 \\
0 \arrow{r} & \mathbf{G}_m \arrow{r} & \mathbf{GL}_n \arrow{r} & \mathbf{PGL}_n \arrow {r} & 0\\
\end{tikzcd}
\]
where $\ker_n$ denotes the kernel of the $n$th power map $\mathbf{G}_m \to \mathbf{G}_m$.
\end{proposition}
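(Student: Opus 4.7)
The plan is to treat each row as an exact sequence of fppf sheaves of groups separately, then check that the two vertical squares commute. The bottom row is essentially the definition of $\mathbf{PGL}_n$: it is constructed as the fppf sheaf-quotient of $\mathbf{GL}_n$ by its scheme-theoretic center. The scheme-theoretic center of $\mathbf{GL}_n$ consists of scalar matrices, so the kernel of $\mathbf{GL}_n \to \mathbf{PGL}_n$ is the image of $\mathbf{G}_m$ under $\lambda \mapsto \lambda I$, and surjectivity on fppf sheaves is built into the construction. No real work is needed here beyond citing the definition.

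For the top row, I would first identify the kernel of the composite $\mathbf{SL}_n \hookrightarrow \mathbf{GL}_n \to \mathbf{PGL}_n$. A section $g \in \mathbf{SL}_n(R)$ is killed by the map to $\mathbf{PGL}_n$ iff it is fppf-locally a scalar matrix $\lambda I$, by exactness of the bottom row. Since $\det(\lambda I) = \lambda^n$ and $\det(g) = 1$, this forces $\lambda^n = 1$, i.e.\ $\lambda \in \mu_n(R)$ after the cover. Conversely every $\zeta \in \mu_n$ gives a scalar matrix in $\mathbf{SL}_n$ mapping to the identity of $\mathbf{PGL}_n$. Hence the kernel is precisely the image of $\mu_n$ under $\zeta \mapsto \zeta I$.

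For fppf surjectivity of $\mathbf{SL}_n \to \mathbf{PGL}_n$, given a section $\bar g \in \mathbf{PGL}_n(U)$, I would first lift it to $\tilde g \in \mathbf{GL}_n(U')$ on some fppf cover $U' \to U$ by exactness of the bottom row. The determinant $d = \det(\tilde g) \in \mathbf{G}_m(U')$ need not be $1$, but after pulling back along the further fppf cover $U'' = \Spec_{U'} \mathcal{O}_{U'}[t]/(t^n - d^{-1}) \to U'$ we obtain a square root $\mu$ with $\mu^n = d^{-1}$. Then $\mu \tilde g \in \mathbf{SL}_n(U'')$ has the same image in $\mathbf{PGL}_n(U'')$ as $\bar g|_{U''}$, yielding the required fppf-local lift.

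Commutativity of the diagram is then immediate: on the left, both paths send $\zeta \in \mu_n$ to the scalar matrix $\zeta I$; on the right, the natural map $\mathbf{SL}_n \to \mathbf{PGL}_n$ that we just constructed is by definition the one factoring through $\mathbf{GL}_n$. The only mildly non-formal step in the entire argument is the fppf local extraction of an $n$th root in the surjectivity check, and that is exactly where the fppf topology (as opposed to, say, the \'etale or Zariski topology in characteristics dividing $n$) is essential.
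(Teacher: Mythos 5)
Your argument is correct. The paper itself gives no proof of this proposition --- it simply cites Giraud (pp.\ 341--343) --- so there is nothing to match line by line; what you have written is the standard direct verification that one would find in such a reference, and it is complete: kernel of the bottom row is the scalar matrices by the quotient description of $\mathbf{PGL}_n$, kernel of the top row is forced into $\mu_n$ by the determinant condition $\lambda^n=\det(\lambda I)=\det(g)=1$, and surjectivity of $\mathbf{SL}_n\to\mathbf{PGL}_n$ is obtained by rescaling a local $\mathbf{GL}_n$-lift after adjoining an $n$th root of the inverse determinant along the finite locally free cover $\Spec_{U'}\mathcal{O}_{U'}[t]/(t^n-d^{-1})$. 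You also correctly isolate the one non-formal point, namely that this last cover is fppf but not \'etale when the characteristic divides $n$, which is exactly why the statement is posed for the fppf topology. Two small remarks. First, ``square root'' in your surjectivity step should read ``$n$th root.'' Second, the assertion that the bottom row ``is the definition'' of $\mathbf{PGL}_n$ hides a little content: in many sources $\mathbf{PGL}_n$ is instead defined as $\mathrm{Aut}(M_n)$ or as an open subscheme of $\mathbf{P}(M_n)$, and identifying it with the fppf quotient $\mathbf{GL}_n/\mathbf{G}_m$ (equivalently, showing $\mathbf{GL}_n\to\mathbf{PGL}_n$ is an fppf epimorphism with kernel exactly the scalars) is itself the substance of the bottom row; if you adopt one of those definitions you should cite or sketch that identification rather than declare it tautological.
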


\begin{proof} See \cite{giraud} pg. 341-343. 
\end{proof}

\noindent We follow Giraud's nonabelian formalism \cite{giraud}: let $\H^1(X, G)$ denote isomorphism classes of left $G$-torsors and let $\H^2(X, \mathbf{G}_m)$ denote isomorphism classes of $\mathbf{G}_m$-gerbes. The proposition above implies we obtain a morphism of two long exact sequences (up to degree 2) of pointed sets for any $n$:

\[\begin{tikzcd}
... \arrow{r} & \H^1(X,\mathbf{\mu}_n) \arrow{r} \arrow{d} & \H^1(X,\mathbf{SL}_n)\arrow{r} \arrow{d} & \H^1(X,\mathbf{PGL}_n) \arrow{r} \arrow{d}{\mathrm{id}} & \H^2(X, \mathbf{\mu}_n) \arrow{r} \arrow{d} & ... \\
... \arrow{r} & \H^1(X,\mathbf{G}_m) \arrow{r} &  \H^1(X,\mathbf{GL}_n) \arrow{r} & \H^1(X,\mathbf{PGL}_n) \arrow{r}{\delta_n} & \H^2(X,\mathbf{G}_m) \arrow{r} & ...\\
\end{tikzcd}
\]
By commutativity we note that every $\mathbf{PGL}_n$-torsor gets mapped to a $n$-torsion class in $\H^2(X, \mathbf{G}_m)$. In particular, we have $\text{Im}(\delta_n) \subset \Br'(X)$. Recall that $\mathbf{PGL}_n$-torsors correspond to geometric objects such as Azumaya algebras of degree $n$ or Brauer-Severi varieties of relative dimension $n-1$. As such, we make the following definition

\begin{definition} \label{def:geombr} Let $X$ be an algebraic stack, we call the union $\bigcup_{n \geq 2} \text{Im}(\delta_n) \subset \Br'(X)$ the \emph{geometric Brauer group}. We denote it by $\Br(X)$.
\end{definition}

Note that by our definition, it is not clear that the set $\Br(X)$ is a $\emph{group}$, but in fact it is and for a proof we refer to the following 

\begin{Theorem} \label{BrauerMap} Let $X$ be an algebraic stack. Then there is a natural injection of abelian groups $\delta: \Br(X) \to \Br'(X)$, the Brauer map. Under this injection, the Brauer-class of an Azumaya Algebra $A$ of degree $n$ can be identified with the $\mathbf{G}_m$-gerbe of trivializations of $A$, $\mathscr{X}_A$. More precisely, $\mathscr{X}_A(T)$ is the category of pairs $(V, \phi)$ where $V$ is a vector bundle on $T$ and a trivialization $\phi: \End(V) \cong A|_T$. A morphism from $(V,\phi)$ to $(W, \psi)$ is an isomorphism from $V \to W$ compatible with the trivializations $\phi$ and $\psi$.
\end{Theorem}
\begin{proof} See \cite[Chapter V.4.4]{giraud}.
\end{proof}

\noindent $\mathbf{Question}$ (Grothendieck \cite{Brauer2}) Is the Brauer map $\delta: \Br(X) \to \Br'(X)$ surjective?

\subsection{Twisted Sheaves}
 In what follows, we define twisted sheaves. At the expense of introducing stacky complexity, they will clarify the question above. Let $D \subset \mathbf{G}_m$ be a subgroup scheme.

\begin{definition} \label{DefTwisted} Let $\mathscr{X}$ be a $D$-gerbe on $X$. Any sheaf $F$ on $\mathscr{X}$ has a right action it inherits from the left action of $\text{Aut}(x) \cong D$ on $x \in \mathscr{X}$: if $\phi: x \to x$ is an automorphism then so is $F(\phi):F(x) \to F(x)$. If we assume that $F$ is also quasicoherent, then it also has a left action of $D$ it obtains via the $\mathcal{O}$-module structure. Indeed, a (local) section of $D \subset \mathbf{G}_m \subset \mathcal{O}_{\ms X}$ induces a (local) automorphism of $F$ defined by left-multiplication. We say a quasi-coherent sheaf $F$ on $\mathscr{X}$ is \emph{twisted} if this left action is the one associated to the right action above.
\end{definition}

By the Kummer sequence, every torsion class in $\H^2(X, \mathbf{G}_m)$ has a preimage in $\H^2(X, \mu_n)$. We say that a $\mu_n$-gerbe $\ms Y$ over $X$ \emph{gives rise} to a torsion $\mathbf{G}_m$-gerbe $\ms X$ over $X$ if $[\ms Y]$ is a preimage of $[\ms X]$. Next we explain the connection between twisted vector bundles and Azumaya algebras on algebraic spaces. The following result is well known to the experts but not stated or proven in this generality (compare with \cite[Proposition 3.1.2.1 (1)]{lieblich2008twisted} and \cite[Theorem 3.6]{EHKV}).

\begin{proposition} \label{thm:brequiv} Let $X$ be an algebraic space, $\ms X$ a $\mathbf{G}_m$-gerbe over $X$, and $\ms Y$ a $\mu_n$-gerbe over $X$ giving rise to $\ms X$. Then the following are equivalent
\begin{enumerate}
\item The cohomological Brauer class $[\ms X]$ is in the geometric Brauer group $\Br(X)$.
\item The $\mathbf{G}_m$-gerbe $\ms X$ admits a nonzero twisted vector bundle.
\item The $\mu_n$-gerbe $\ms Y$ admits a nonzero twisted vector bundle.
\end{enumerate}
\end{proposition}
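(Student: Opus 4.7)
The plan is to establish (1) $\Leftrightarrow$ (2) via the standard correspondence between Azumaya algebras and twisted vector bundles through the $\mathscr{E}nd$-construction, and then (2) $\Leftrightarrow$ (3) by comparing the categories of twisted sheaves on $\ms X$ and $\ms Y$ under the natural change-of-bands morphism.

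For (1) $\Rightarrow$ (2), suppose $[\ms X] = [A] \in \Br(X)$ for an Azumaya algebra $A$ of degree $n$. By Theorem \ref{BrauerMap} there is an equivalence $\ms X \simeq \mathscr{X}_A$, and on $\mathscr{X}_A$ the rule $(V,\phi) \mapsto V$ yields a tautological rank-$n$ vector bundle whose central inertia $\mathbf{G}_m \subset \mathrm{Aut}(V,\phi)$ acts by scaling. This is precisely the condition of Definition \ref{DefTwisted}, so the tautological bundle is a nonzero $\mathbf{G}_m$-twisted vector bundle on $\ms X$.

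For (2) $\Rightarrow$ (1), given a nonzero twisted vector bundle $V$ on $\ms X$, I would form $\mathscr{E}nd(V) \cong V^\vee \otimes V$. The inertia $\mathbf{G}_m$ acts with weight $+1$ on $V$ and with weight $-1$ on $V^\vee$, so the two actions cancel and $\mathscr{E}nd(V)$ carries the trivial inertia action; hence it descends to a sheaf of $\mathcal{O}_X$-algebras $A$ on $X$. Fppf-locally where $\ms X$ trivializes, $V$ pulls back to an ordinary vector bundle $W$ and $A$ pulls back to $\mathscr{E}nd(W)$, a matrix algebra; so $A$ is Azumaya. The bundle $V$ is then naturally an object of the gerbe of trivializations $\mathscr{X}_A$, giving a morphism of $\mathbf{G}_m$-gerbes $\ms X \to \mathscr{X}_A$, which is automatically an equivalence. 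Thus $[\ms X] = [A] \in \Br(X)$.

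For (2) $\Leftrightarrow$ (3), the inclusion of bands $\mu_n \hookrightarrow \mathbf{G}_m$ induces a natural morphism $\pi: \ms Y \to \ms X$ of $X$-stacks. Pullback along $\pi$ simultaneously restricts the inertia action from $\mathbf{G}_m$ to $\mu_n$ and restricts the scalar action of $\mathbf{G}_m \subset \mathcal{O}_{\ms X}^\times$ to $\mu_n$, so it sends $\mathbf{G}_m$-twisted sheaves to $\mu_n$-twisted sheaves. Fppf-locally on $X$, both gerbes trivialize and the categories $\mathrm{QCoh}(\ms X|_U)$ and $\mathrm{QCoh}(\ms Y|_U)$ decompose into weight eigensubcategories over $\mathbb{Z}$ and $\mathbb{Z}/n$ respectively; the weight-$1$ piece of each reduces canonically to $\mathrm{QCoh}(U)$, and $\pi^*$ identifies these pieces. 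By fppf descent this extends to a global equivalence between the categories of twisted quasi-coherent sheaves on $\ms X$ and on $\ms Y$; it manifestly preserves local freeness and rank, so nonzero twisted vector bundles on one correspond to nonzero twisted vector bundles on the other.

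The main subtlety is the inertia-cancellation in (2) $\Rightarrow$ (1): one must verify carefully that the left $\mathcal{O}_{\ms X}$-action on $V$ and the right action on $V^\vee$ combine to give a sheaf of algebras with trivial center-weight, so that $\mathscr{E}nd(V)$ genuinely descends to an algebra on $X$ which is étale-locally a matrix algebra. Once this is set up correctly, the remaining implications are formal from the Kummer sequence and the local structure of gerbes for $\mathbf{G}_m$ and $\mu_n$.
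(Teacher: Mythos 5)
Your proof is correct, and the equivalences $(1)\Leftrightarrow(2)$ follow the paper's argument essentially verbatim (tautological bundle on $\mathscr{X}_A$ one way, the $\mathscr{E}nd(V)$ construction and the induced equivalence $\ms X \to \mathscr{X}_{\mathscr{E}nd(V)}$ the other way). One small imprecision there: after trivializing the gerbe locally, $V$ does not literally become a bundle pulled back from the base; rather it becomes $W \otimes \chi$ for an ordinary bundle $W$ and the tautological twisted line bundle $\chi$ on $\mathbf{B}\mathbf{G}_m$, and it is $\mathscr{E}nd(V) \cong \mathscr{E}nd(W)$ that descends --- this is exactly the untwisting step the paper spells out, and you flag it yourself as the point needing care. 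Where you genuinely diverge is $(2)\Leftrightarrow(3)$: you prove the stronger statement that pullback along $\ms Y \to \ms X$ is an equivalence between the full categories of twisted quasi-coherent sheaves, by checking it on weight eigenpieces after local trivialization and then invoking descent for the stack of twisted sheaves. The paper instead argues asymmetrically: $(2)\Rightarrow(3)$ is plain pullback, while $(3)\Rightarrow(2)$ pushes $W$ forward along the affine morphism $i$, takes the weight-$1$ eigensummand $(i_*W)_1$, and verifies by an explicit local adjunction computation ($\Hom(\chi, \chi^{\oplus n})$ is free of rank $n$) that this summand is a nonzero twisted vector bundle. Your route is cleaner and yields more (a rank-preserving equivalence of categories, which is the statement one finds in Lieblich's work on twisted sheaves); the paper's route avoids any appeal to descent for categories and only extracts the single twisted bundle it needs. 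Both are complete; just make sure, when asserting that the weight-$1$ pieces "reduce canonically" to $\Qcoh(U)$, that you phrase this as an equivalence depending on a choice of local trivialization --- what is canonical, and what the descent argument actually needs, is only that $\pi^*$ restricted to the weight-$1$ subcategories is locally an equivalence.
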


\begin{proof} (1 $\Rightarrow$ 2) Let $T$ be any algebraic space over $X$, as we saw in Theorem \ref{BrauerMap} we have that $\ms X(T)$ is the groupoid of pairs $(V, \phi)$ where $V$ is a vector bundle on $T$ and $\phi$ is an isomorphism $\End(V) \to A|_T$. There is a tautological vector bundle $\mathbf{V}$ on this stack: it is the sheaf $(V,\phi) \mapsto \Gamma(T, V)$ where the restriction maps are those induced by morphisms in $\mathscr{X}_A$. Noting that the automorphisms of $(V, \phi)$ are precisely left multiplication on $V$ by an element in $\mathbf{G}_m(T)$, it follows that $\mathbf{V}$ is a twisted vector bundle of nonzero constant rank on $\ms X$.

(2 $\Rightarrow$ 1) Let $V$ be a twisted vector bundle of nonzero constant rank and consider $\text{End}(V)$. Note that $\mathbf{G}_m$ acts trivially on $V \otimes V^{\vee} \cong \text{End}(V)$ and thus, it is the pull-back of an algebra on $X$ (even though $V$ is \emph{not} the pull-back of a vector bundle on $X$). In fact, $\text{End}(V)$ is the pullback of an \emph{Azumaya} algebra on $X$. That is to say, after an \'etale cover $X' \to X$, there is an isomorphism $\text{End}(V)|_{X'} \cong \text{End}(W)$ for a vector bundle $W$ on $X'$. Indeed, \'etale locally on $X$, the gerbe map $\mathscr{X} \to X$ admits a section which, in turn, yields an equivalence $\mathscr{X} \to \B\mathbf{G}_m$. This equivalence endows $\mathscr{X}$ with a twisted line bundle, $L$, which, in turn, gives rise to a trivialization of $\text{End}(V) \cong \text{End}(V \otimes L^{\vee})$. Note that the inertia of $\ms X$ acts trivially on $V \otimes L^{\vee}$, so that it is the pullback of a vector bundle on $X$. This shows that, any $T$ point of $\mathscr{X}$ canonically yields a trivialization of $\text{End}(V)$. In other words we obtain a map $\mathscr{X} \to \mathscr{X}_{\text{End}(V)}$ which respects the $\mathbf{G}_m$-gerbe structure. Since this must be an isomorphism, the first statement now follows from Theorem \ref{BrauerMap}.

Suppose $\ms Y$ is a $\mu_n$-gerbe on $X$ which gives rise to $\ms X$. By the nonabelian formalism of Giraud, this implies there exists a $\mu_n$ gerbe $\mathscr{Y}$ and a map $i: \mathscr{Y} \to \mathscr{X}$ equivariant for $\text{ker}_n: \mu_n \to \mathbf{G}_m$ (see \cite[Corollaire 3.1.8 pg. 251]{giraud} for a proof). 

(2 $\Rightarrow$ 3) Thus, if $V$ is a twisted vector bundle on $\ms X$, we may pull back a given twisted vector bundle to obtain one over $\mathscr{Y}$. 

(3 $\Rightarrow$ 2) Conversely, suppose that $W$ is a nonzero twisted vector bundle on $\ms Y$, then we claim that $i_*W$ has a nonzero twisted subbundle of finite rank. Indeed, since $i_*W$ inherits an action by the band $\mathbf{G}_m$ of $\ms X$ we obtain an eigendecomposition $i_*W=\bigoplus_{i \in \mathbf{Z}} (i_*W)_i$. Our claim is that the twisted part $(i_*W)_1$ is locally free of rank $n$. We can check this locally, so we may assume that $X=\Spec R$ is the spectrum of a strictly local ring, $\ms Y=\mathbf{B}\mathbf{\mu}_{n, R}$, $\ms X=\mathbf{B}\mathbf{G}_{m,R}$, $i$ is the standard morphism $\B\mathbf{\mu}_{n,R} \to \mathbf{B}\mathbf{G}_{m,R}$ and $W=\chi^{\oplus n}$ where $\chi \in \Pic(\mathbf{B}\mu_{n,R})$ is the twisted line bundle. Let $\pi: \mathbf{B}\mathbf{G}_{m,R} \to \Spec R$ denote the gerbe map and note that if $\chi'$ is a twisted line bundle on $\mathbf{B}\mathbf{G}_{m,R}$ then it suffices to show $\chi' \otimes (i_*W)_1^{\vee}$ is a locally free module of rank $n$ on $\Spec R$. Indeed, we have
\[\chi' \otimes (i_*W)_1^{\vee}=\pi_*(\chi' \otimes (i_*W)^{\vee})=\widetilde{\H^0(\mathbf{BG}_{m,R}, \mathbf{Hom}(\chi', i_*W))}=\Hom(\chi',i_*W)\]
and therefore it suffices to show $\Hom(\chi', i_*W)$ is a free $R$-module of finite rank. However, by adjunction applied to $i$ we obtain
\[\Hom(\chi',i_*W)=\Hom(i^*\chi',W)=\Hom(\chi, \chi^{\oplus n})\]
which is a free $R$-module of rank $n$. 
\end{proof}

Thus, to show that $\Br(X)=\Br'(X)$ it suffices to show that any $\mu_n$-gerbe over $X$ carries a nonzero twisted vector bundle. In this way, we can circumvent the use of Azumaya algebras altogether and focus on producing twisted vector bundles on gerbes instead.

\section{There are enough Azumaya Algebras in codimension 2}

\subsection{Formal affine neighborhoods}

Algebraic spaces do not admit affine open neighborhoods about points (or closed subsets) in general. To remedy this, we show that we can find \emph{flat} neighborhoods of points (and \emph{affine} closed subsets). Even though flat neighborhoods are not subsets of the underlying topological space they will be sufficient for our purposes. 

\begin{definition} \label{def:flathood} Let $X$ be an algebraic stack and $C \subset X$ a finitely presented closed substack. Then we call a flat morphism $Y \to X$ a \emph{flat neighborhood} of $C$ if the natural morphism $Y \times_X C \to C$ is an isomorphism. We say such a neighborhood is \emph{affine} if $C$ is an affine scheme.
\end{definition}

\begin{proof}[\textbf{Proof of Theorem \ref{T1tubular}}] We have the following cartesian square
\[\begin{tikzcd} 
\Spec C \rar \dar & \widehat{Z} \dar{f} \\
\Spec A \rar{j} & X \\
\end{tikzcd} \]
which, using the terminology of \cite{hall2016mayer}, is a \emph{flat Mayer-Vietoris} square by \cite[Lemma 3.2]{hall2016mayer}. 
By Proposition \ref{thm:brequiv}, it remains to show that every $\mu_n$-gerbe over $X$ admits a nonzero twisted vector bundle. Thus, by \cite[Lemma 3.1]{hall2016mayer}, if we base change by a $\mu_n$-gerbe $\ms X \to X$ we obtain another flat Mayer-Vietoris square
\[\begin{tikzcd} 
\ \ms X_C \rar \dar & \ms X_{\widehat{Z}} \dar{f} \\
\ms X_A \rar{i} & \ms X \\
\end{tikzcd} \]
and the natural functor
\[\Qcoh(\ms X) \cong \Qcoh(\ms X_A) \times_{\Qcoh(\ms X_C)} \Qcoh(\ms X_{\widehat{Z}})\] 
is an equivalence of categories (see \cite[Theorem B(1)]{hall2016mayer}). In particular, it suffices to produce a twisted vector bundle on $\ms X_A$, another one on $\ms X_B$ and an isomorphism between their two restrictions on $\ms X_C$. By \cite{gabber1981some} and Proposition \ref{thm:brequiv} there exists nonzero twisted vector bundles $V$ and $W$ on $\ms X_A$ and $\ms X_B$ respectively. By \cite[Corollary 3.1.4.4]{lieblich2008twisted} we may replace $V$ and $W$ with $(V \otimes P_1)^{\oplus k}$ and $(W \otimes P_2)^{\oplus m}$ for suitable projective modules $P_1$ and $P_2$ on $A$ and $B$ respectively so that $V^{\otimes n}$ and $W^{\otimes n}$ are both free modules. By taking appropriate direct sums we may also suppose that $V$ and $W$ have the same rank. 

Now consider the restrictions of $V$ and $W$ to $\ms X_C$ and set $M=V \otimes V^{\vee}$ and $N=W \otimes V^{\vee}$. Observe that $M$ and $N$ are vector bundles on $\Spec C$ satisfying $M^{\otimes n} \cong N^{\otimes n}=\mathcal{O}_{\Spec C}^{\oplus j}$. Thus, we have $N^{\oplus m}\cong M^{\oplus m}=\mathcal{O}_{\Spec C}^{\oplus r}$ for some $m$ by \cite[Proposition 3.1.4.3 (4)]{lieblich2008twisted}. Moreover, there is an obvious isomorphism $V \otimes N \cong W \otimes M$. This implies there are isomorphisms
\[V^{\oplus r} \cong V \otimes N^{\oplus m} \cong W \otimes M^{\oplus m} \cong W^{\oplus r}\]
showing that $V^{\oplus r}$ and $W^{\oplus r}$ agree on $\ms X_C$. \end{proof}

\begin{proposition} \label{prop:flathood} Let $X$ be a quasi-compact algebraic space with affine diagonal and $C \to X$ a finitely presented closed immersion which is affine i.e. $C=\Spec A$. Then there is an affine flat neighborhood $\widehat{C} \to X$ of $C$.
\end{proposition}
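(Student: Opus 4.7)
The plan is to realize $\widehat{C}$ as the algebraization of the formal completion of $X$ along $C$, and then verify flatness by reducing to a Noetherian situation. Let $\mathcal{I} \subset \mathcal{O}_X$ be the finite-type quasi-coherent ideal sheaf defining $C$, and for each $n \geq 0$ set $C_n = V(\mathcal{I}^{n+1}) \hookrightarrow X$. Since $C = \Spec A$ is affine and each $C_n$ is a nilpotent thickening of $C$, a standard fact (nilpotent thickenings of affine algebraic spaces are affine) lets us write $C_n = \Spec A_n$, with a surjective inverse system of transition maps $A_{n+1} \twoheadrightarrow A_n$ whose kernels are nilpotent. Define
\[
B := \varprojlim_{n} A_n, \qquad \widehat{C} := \Spec B,
\]
so that $B$ is $J$-adically complete for $J = \ker(B \twoheadrightarrow A)$.

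To produce a structural morphism $\widehat{C} \to X$ that factors the compatible system $\{\Spec A_n \to X\}$, I would appeal to coherent Tannaka duality as developed in \cite{hall2019coherent}: because $X$ is a quasi-compact algebraic space with affine diagonal, morphisms from an affine scheme into $X$ are classified by suitable symmetric monoidal, right-exact pullback functors on $\Qcoh(X)$. The assignment $\mathcal{F} \mapsto \varprojlim_{n} \Gamma(C_n, \mathcal{F}|_{C_n})$, taken with values in $B$-modules, defines such a functor and hence the sought map $\Spec B \to X$. The isomorphism $\widehat{C} \times_X C \cong C$ is then automatic, since reducing $B$ modulo $J$ returns $A$.

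The substantive step, and the one I expect to be the main obstacle, is flatness of $\widehat{C} \to X$. In the Noetherian setting this is classical: the $I$-adic completion of a Noetherian ring along a finitely generated ideal is flat over the original ring (via Artin--Rees). To export this to our general setting, I would apply Rydh's Noetherian approximation for quasi-compact algebraic spaces with affine diagonal: write $X \cong \varprojlim X_\lambda$ as a cofiltered limit of finite-type $\mathbf{Z}$-algebraic spaces with affine diagonal and affine transition maps. Since $C \to X$ is finitely presented, it descends to a closed immersion $C_\lambda \to X_\lambda$ for some $\lambda$, and the whole tower $\{C_{n,\lambda}\}$ together with $B_\lambda := \varprojlim_n A_{n,\lambda}$ descends as well. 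Flatness of $\Spec B_\lambda \to X_\lambda$ can then be checked after pullback along an étale chart, where it reduces to the classical flatness of a Noetherian adic completion, and this flatness is preserved upon base change to $X$. The delicate point will be matching the formation of the inverse limit $B$ with the approximation $X \cong \varprojlim X_\lambda$, but this is controlled by the fact that each $C_n \to X$ is finitely presented and by standard limit manipulations for quasi-coherent sheaves.
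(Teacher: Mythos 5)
Your toolbox is the right one---the adic tower $C_n = V(\mathcal{I}^{n+1})$ of affine thickenings, coherent Tannaka duality to algebraize the compatible system, and Rydh's Noetherian approximation---and it is exactly the toolbox the paper uses. But you apply the steps in the wrong order, and this creates a genuine gap. You commit at the outset to $\widehat{C} = \Spec B$ with $B = \varprojlim_n A_n$ formed over the original (possibly non-Noetherian) $X$, and then hope to verify flatness of this particular $B$ by descending it to some $X_\lambda$. Two things go wrong. First, completion does not commute with base change along $X \to X_\lambda$: writing $\mathcal{O}_X = \colim_\lambda \mathcal{O}_{X_\lambda}$ one has $A_n \cong A_{n,\lambda} \otimes_{\mathcal{O}_{X_\lambda}} \mathcal{O}_X$, but $\varprojlim_n \bigl(A_{n,\lambda} \otimes \mathcal{O}_X\bigr)$ is in general not $\bigl(\varprojlim_n A_{n,\lambda}\bigr) \otimes \mathcal{O}_X$, so flatness of $B_\lambda$ over $X_\lambda$ says nothing about $B$ over $X$. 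This is not merely delicate: the $I$-adic completion of a non-Noetherian ring along a finitely generated ideal can genuinely fail to be flat (see the Stacks Project's example of a nonflat completion, \cite[Tag 0AL8]{stacks}), so the specific object you construct need not satisfy the conclusion at all. Second, the Tannaka statement you invoke to produce $\Spec B \to X$, namely \cite[Theorem 1.5(ii)]{hall2019coherent}, algebraizes compatible systems $\{\Spec A/I^n \to X\}$ for a \emph{Noetherian} $I$-adically complete pair; at the point where you invoke it, $B$ is not known to be Noetherian, and even the identification $B/\mathcal{I}B \cong A$, which you call automatic, requires an Artin--Rees type argument that is unavailable.

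The repair is to reverse the order, which is what the paper does: first use Rydh's approximation to reduce to $X$ of finite type over $\mathbf{Z}$, observing that an affine flat neighborhood of $C_\lambda$ over $X_\lambda$ pulls back along the affine morphism $X \to X_\lambda$ to an affine flat neighborhood of $C$; only \emph{then} form $\widehat{A} = \varprojlim_n A_n$, which is now a Noetherian adic ring, so that Tannaka duality applies, $\widehat{A}/I^{n+1}\widehat{A} \cong A_n$ holds, and flatness follows from the local criterion. The price is that the resulting flat neighborhood of $C$ is the pullback of the Noetherian completion rather than the honest $\mathcal{I}$-adic completion of $X$ along $C$; since the proposition only asserts existence, this is harmless.
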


\begin{proof} Note that by \cite[Theorem C and D]{RydhApproximation}, we may write $X=\lim_{\lambda \in I} X_{\lambda}$ where each $X_{\lambda}$ is an algebraic space of finite type over $\Spec \mathbf{Z}$ with affine diagonal, each bonding morphism $X_{\lambda} \to X_{\lambda'}$ is affine, and each morphism $X \to X_{\lambda}$ is affine. Moreover, because $C \to X$ is of finitely presention, there is a $\lambda \in I$ and a morphism $C_{\lambda} \to X_{\lambda}$ equipped with an isomorphism $C_{\lambda} \times_{X_{\lambda}} X \cong C$ of spaces over $X$ (see e.g. \cite[Tag 07SK]{stacks}). By enlarging $\lambda$ if necessary, we may also assume that $C_{\lambda}$ is an affine scheme and that $C_{\lambda} \to X_{\lambda}$ is a closed immersion (see \cite[Proposition B.3]{RydhApproximation}). If we can find an affine flat neighborhood $\widehat{C_{\lambda}} \to X_{\lambda}$ of $C_{\lambda}$, then by pulling back along the affine morphism $X \to X_{\lambda}$ we obtain an affine flat neighborhood of $C$. Thus, we may assume that $X$ is of finite type over $\Spec \mathbf{Z}$.

Note that $C=C_0$ is cut out by an ideal sheaf $I_C \subset \mathcal{O}_X$, we define $C_i \subset X$ to be closed subscheme cut out by the ideal sheaf $I_C^{i+1} \subset \mathcal{O}_X$. Note that each $C_i=\Spec A_i$ is affine and that the system of ring maps
\[\dots A_3 \to A_2 \to A_1 \to A_0=A\]
is adic. Moreover, this gives rise to an infinite inductive system of closed immersions of affine schemes
\[C=C_0 \to C_1 \to C_2 \to \dots \to X\]
Thus, set $\widehat{C}=\Spec \widehat{A}$ where $\widehat{A}=\lim_i A_i$ which by \cite[Proposition 7.2.7 and Corollaire 7.2.8]{grothendieck1960elements} is a noetherian adic ring. By the universal property of limits this affine scheme is the colimit of the inductive system in the category of affine schemes. However, since $X$ is not affine we cannot use this to produce a map $\widehat{C} \to X$. For this, we use \cite[Theorem 1.5 (ii)]{hall2019coherent} to obtain a map $\widehat{C} \to X$.

It remains to check that $\widehat{C} \to X$ is a flat neighborhood. This follows because $\widehat{C} \times_X C_i \to C_i$ is an isomorphism for every $i$ (see \cite[Tag 0523]{stacks}). Indeed, by Grothendieck's existence theorem applied to $\widehat{C}$ we have
\[\mathcal{O}_X/I_C^{i+1}|_{\widehat{C}}=\lim_n (j_n)_*(\mathcal{O}_X/I_C^{i+1}|_{C_n})\]
and the left hand side is isomorphic to $(j_i)_*\mathcal{O}_X/I_C^{i+1}$ since it is so for all but finitely many terms in the limit. \end{proof}

\begin{corollary} \label{prop:stratifyBrauer} Let $X$ be an algebraic space with affine diagonal which admits an open immersion $j: \spec(A) \to X$ whose complement $Z=(X\backslash \Spec(A))_{\mathrm{red}}$ is a finitely presented closed subscheme which is affine i.e. $Z=\Spec R$. Then $\Br(X)=\Br'(X)$.
\end{corollary}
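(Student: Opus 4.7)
The plan is to deduce this corollary immediately by chaining Proposition \ref{prop:flathood} and Theorem \ref{T1tubular}: the proposition produces precisely the affine flat neighborhood of $Z$ that the theorem requires as hypothesis.

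First, I would observe that $X$ is quasi-compact. Indeed, $X$ is set-theoretically covered by the quasi-compact open $\Spec A$ and the quasi-compact closed subscheme $Z = \Spec R$. Combined with the standing assumption that $X$ has affine diagonal, this places $X$ within the hypotheses of Proposition \ref{prop:flathood}.

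Next, I would apply Proposition \ref{prop:flathood} to the finitely presented affine closed immersion $Z = \Spec R \hookrightarrow X$ in order to obtain an affine flat neighborhood $f \colon \widehat{Z} = \Spec B \to X$ of $Z$. With both the open immersion $j \colon \Spec A \to X$ and the affine flat neighborhood $f \colon \widehat{Z} \to X$ of $Z = (X \setminus \Spec A)_{\mathrm{red}}$ now in place, the hypotheses of Theorem \ref{T1tubular} are met verbatim, and we conclude $\Br(X) = \Br'(X)$.

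There is no genuine obstacle in this argument: the corollary is really a packaging statement. The substantive content has already been discharged elsewhere — Gabber's theorem on the separated union of two affines entered through Theorem \ref{T1tubular} (together with the flat Mayer--Vietoris equivalence and the bookkeeping on twisted vector bundles via \cite{lieblich2008twisted}), while the noetherian approximation, formal completion, and Tannaka-type algebraization via \cite[Theorem 1.5 (ii)]{hall2019coherent} were absorbed into Proposition \ref{prop:flathood}. The only thing one must verify by hand is the mild quasi-compactness observation above.
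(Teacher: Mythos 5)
Your proposal is correct and follows exactly the paper's own argument: verify quasi-compactness from the cover by $\Spec A$ and $Z=\Spec R$, invoke Proposition \ref{prop:flathood} to produce the affine flat neighborhood of $Z$, and then apply Theorem \ref{T1tubular}. Nothing further is needed.
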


\begin{proof} First observe that $X$ is necessarily quasi-compact: if $\{U_i\}$ is an open cover, there is a finite subcollection of the $U_i$ which cover $\Spec A$ \emph{and} $\Spec R$. By Proposition \ref{prop:flathood} there is a flat affine neighborhood of $\Spec R$, call it $f: \Spec B=\widehat{Z} \to X$. Now Theorem \ref{T1tubular} applies. \end{proof}

The next result guarantees that given any finite set of codimension $\leq 1$ points in a separated locally noetherian algebraic space, there exists a affine open subscheme containing this finite set.

 \begin{proposition} \label{CKincodim1} Suppose $X$ is a separated, locally noetherian algebraic space and $x_1,...,x_n$ is a finite set of points all having codimension $\leq 1$. Then there is an affine open $\Spec{B} \subset X$ containing all the $x_i$.
 \end{proposition}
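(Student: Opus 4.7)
The plan is to reduce the statement to the classical Chevalley--Kleiman theorem for separated noetherian schemes. Since $X$ is locally noetherian and we are concerned with only finitely many points, I would first shrink $X$ to a quasi-compact open neighborhood of $\{x_1,\ldots,x_n\}$, so that without loss of generality $X$ becomes noetherian.

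The essential step is to pass from an algebraic space to an honest scheme. Because $X$ is separated and locally noetherian, it is a decent algebraic space in the sense of \cite{stacks}. The structure theory of decent spaces then guarantees that every point of codimension $\leq 1$ admits a scheme open neighborhood; equivalently, the non-schematic locus $X \setminus X_{\mathrm{sch}}$ is closed of codimension $\geq 2$. Consequently $X_{\mathrm{sch}} \subseteq X$ is an open subscheme containing every $x_i$, and since $X_{\mathrm{sch}}$ is open in $X$ each $x_i$ retains its codimension $\leq 1$ there. Replacing $X$ by $X_{\mathrm{sch}}$, we may therefore assume that $X$ is a separated noetherian scheme.

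At this point I would invoke the classical Chevalley--Kleiman theorem for schemes: any finite collection of codimension $\leq 1$ points in a separated noetherian scheme is contained in a common affine open subscheme, which yields the desired $\Spec B$. The main obstacle is the middle step, namely locating and correctly invoking the schematic-locus result for decent algebraic spaces; once this reduction is in hand, the rest is a straightforward bootstrap from the classical scheme-theoretic statement. One could alternatively avoid the generic schematic-locus statement by arguing pointwise: for each $x_i$ one directly produces an affine scheme open neighborhood $V_i \subset X$, using the fact that the closure $\overline{\{x_i\}}$ has dimension $\leq 1$ so that étale-local identifications above $x_i$ can be untwisted, and then glues using the separated noetherian hypothesis as in the scheme argument.
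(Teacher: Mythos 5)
Your proof is correct and follows essentially the same route as the paper: the paper likewise first invokes the schematic-locus result for separated locally noetherian algebraic spaces in codimension $\leq 1$ (\cite[Tag 0ADD]{stacks}) to reduce to a noetherian scheme, and then applies the scheme-level statement that finitely many codimension $\leq 1$ points lie in a common affine open (\cite[Tag 09NN]{stacks}). The only cosmetic difference is the order in which you shrink to a quasi-compact neighborhood versus pass to the schematic locus.
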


 \begin{proof} By \cite[Tag 0ADD]{stacks} we may find a open subscheme $U \subset X$ containing all the $x_i$. Now, since any finite set of points in a scheme admits a quasi-compact open neighborhood, we may replace $U$ with a noetherian scheme. To finish, apply \cite[Tag 09NN]{stacks}.
 \end{proof}

 \begin{remark} \label{remarkCK} One says an algebraic space satisfies the Chevalley-Kleiman property if every finite set of points admits a common affine open neighborhood. The previous two propositions may be interpreted as saying every separated, locally noetherian algebraic space satisfies the Chevalley-Kleiman property in codimension $\leq 1$.

 \end{remark}

\begin{proof}[\textbf{Proof of Corollary \ref{T1}}]  Consider an arbitrary $\mu_n$-gerbe $\sX$ on $X$. Using the results above we will show there exists a nonzero twisted vector bundle on $\sX|_U$ where $U \subset X$ is a dense open subset containing all codimension $\leq 2$ points. Our strategy will be to find a nonzero twisted vector bundle $V$ on a dense open subspace containing all the $\emph{singular}$ codimension $\leq 2$ points. Given such a $V$, any reflexive extension, $V'$, of this twisted vector bundle will be locally free at $\emph{all}$ codimension $\leq 2$ points. Indeed, it will be locally free at all singular codimension $\leq 2$ points by construction and on the remaining regular points of codimension $\leq 2$, the Auslander-Buchsbaum formula implies $V'$ is locally free. Passing to the open locus where $V$ is locally free yields the result.

Since the regular locus of $X$ is dense and open, the complement, $\text{Sing}(X)$ is a closed subspace of codimension $\geq 1$ and therefore we may write
\[\text{Sing}(X)=C_1 \cup...\cup C_n \cup B_1 \cup ...\cup B_m\]
where the $C_i$ are codimension $1$ components and the $B_i$ are components of higher codimension. Using Proposition \ref{CKincodim1} there exists a dense open affine subscheme $\Spec (A) \subset X$ containing the generic points of each of the $C_i$. This means all but finitely many codimension 2 points of $X$ lying along $\text{Sing}(X)$ are contained in $\Spec (A)$. Indeed, since $\Spec A \cap C_i$ contains the generic point of the noetherian space $C_i$, there are finitely many components in $C_i \backslash (\Spec A \cap C_i)$ all of whose generic points have codimension $\geq 2$ in $X$. Therefore, the union of the generic points of the finitely many components of $C_1 \backslash (\Spec A \cap C_1),..., C_n \backslash (\Spec A \cap C_n)$ is finite and since the $B_i$ are already codimension $\geq 2$ they contribute at most one codimension $2$ point each.

Now, since $\Spec A$ is dense the (finitely many) codimension $2$ points of $X$ which lie on $\text{Sing}(X) \cap (X \backslash \Spec A)$ must all be codimension $\leq 1$ points of $(X \backslash \Spec A)_{\text{red}}$. As such, this finite set of points admits a common affine open neighborhood of $(X \backslash \Spec A)_{\text{red}}$ by Proposition \ref{CKincodim1}, denote it by $\Spec B \subset (X \backslash \Spec A)_{\text{red}}$. However, because $(X \backslash \Spec A)_{\text{red}}$ has the subspace topology (see \cite[Tag 04CE]{stacks}), there is a open subspace $W \subset X$ so that $W \cap (X \backslash \Spec A)_{\text{red}}=\Spec B$.

Observe that the open subspace $Y=\Spec A \cup W \subset X$ contains all the singular codimension $\leq 2$ points and satisfies the hypothesis of proposition \ref{prop:stratifyBrauer}. Therefore we have $\Br(Y)=\Br'(Y)$ and in particular $\sX|_Y$ admits a nonzero twisted vector bundle of finite rank. After taking a coherent extension of this vector bundle to all of $X$ (see \cite[15.5]{champs}) we may take a reflexive hull. Since the resulting coherent sheaf is already locally free on the singular codimension $2$ points of $\mathscr{X}$, itremains to check this at the regular codimension $2$ points. This follows by the Auslander-Buchsbaum formula. \end{proof}

\begin{remark} \label{Rpcodim2}When working with a $\mu_{pk}$-gerbe $\sX$ in characteristic $p$ note that there exists no \'etale atlas for $\sX$. Thus to check the local freeness of a reflexive module at a regular codimension $2$ point $p$ of $\sX$ one has to be careful. In particular, if $U \to \sX$ is a smooth atlas then a point lying above $p$ may not be a codimension $2$ point. That being said, there does exist a codimension $2$ point lying above $p$, thus we can apply the Auslander-Buchsbaum formula at such a point.
\end{remark}

\subsection{Non-reduced phenomena}

The statement of Corollary \ref{T1} has a generic reducedness hypothesis that we do not know how to remove. As such, we would like to address how the geometricity of a Brauer class defined over $X$ can be deduced from the geometricity of its reduction in some special cases. The first result we have is an easy consequence of the existence of the Frobenius in positive characteristic, we thank Jack Hall for bringing it to our attention. Curiously, it only concerns Brauer classes whose period is prime to the base characteristic. The second is an application of a result of de Jong regarding modifications of Azumaya algebras. 

\begin{proposition} \label{primetopthick} Let $X \to X'$ be a nilpotent thickening of algebraic spaces over $\Spec \mathbf{F}_p$. If every prime to $p$ cohomology class in $\Br'(X)$ is in $\Br(X)$ then every prime to $p$ cohomology class in $\Br'(X')$ is in $\Br(X')$. 
\end{proposition}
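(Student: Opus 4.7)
The plan is to exploit the fact that in characteristic $p$ a sufficiently high iterate of the absolute Frobenius kills any nilpotent ideal. Concretely, let $I \subset \mathcal{O}_{X'}$ be the ideal sheaf of $X$, say $I^k = 0$, and choose $N$ with $p^N \geq k$. The $N$-fold iterate $F^N$ of the absolute Frobenius on $X'$ sends $I$ into $I^{p^N} = 0$, so it factors through $X$ as $X' \xrightarrow{g} X \xrightarrow{i} X'$. This Frobenius retraction will be the engine of the whole argument.

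Given a prime-to-$p$ class $\alpha' \in \Br'(X')$, set $\alpha = i^*\alpha' \in \Br'(X)$; it is again prime to $p$, so by hypothesis $\alpha = [A]$ for some Azumaya algebra $A$ on $X$. The pullback $g^*A$ is an Azumaya algebra on $X'$ whose Brauer class is $g^*\alpha = (i \circ g)^*\alpha' = (F^N)^*\alpha'$. The task is thus reduced to identifying $(F^N)^*\alpha'$ with an integer multiple of $\alpha'$ that is invertible modulo the order of $\alpha'$.

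The key input is that the absolute Frobenius of an $\mathbf{F}_p$-algebraic space acts as multiplication by $p$ on $\H^i(-, \mathbf{G}_m)$: in degree zero, Frobenius sends a unit $a$ to $a^p$, and this propagates to all degrees because the induced endomorphism of the sheaf $\mathbf{G}_m$ is literally the $p$-th power map. Hence $(F^N)^*\alpha' = p^N \alpha'$, and since $\alpha'$ has order coprime to $p$ we can pick a positive integer $c$ with $c p^N \equiv 1$ modulo the order of $\alpha'$, giving $\alpha' = g^*(c\alpha) = [g^*A^{\otimes c}] \in \Br(X')$. The one substantive point is this Frobenius computation: because $\Br'$ is fppf cohomology of the non-constructible sheaf $\mathbf{G}_m$, the usual universal-homeomorphism shortcut for prime-to-$p$ étale cohomology does not directly apply, so the claim has to be extracted from the explicit $p$-th power description rather than cited verbatim. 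Everything else is bookkeeping.
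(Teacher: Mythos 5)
Your proof is correct, and it runs on the same engine as the paper's: the nilpotence of the ideal $I$ forces a high enough iterate of the absolute Frobenius to factor as $X' \xrightarrow{g} X \hookrightarrow X'$, and Frobenius pullback multiplies classes in $\H^2(-,\mathbf{G}_m)$ (equivalently in $\H^2(-,\mu_n)$) by $p$, so the class you can control on $X'$ is $p^N\alpha'$ rather than $\alpha'$ itself. Where you diverge is in how you convert control of $p^N\alpha'$ into control of $\alpha'$. The paper stays in the language of gerbes: it observes that the $\mu_n$-gerbe for $p^m\alpha'$ is $f^*(\sX'|_X)$, hence carries a twisted vector bundle and is a quotient stack, and then uses that the $p^m$-th power map $\mu_n\to\mu_n$ is an isomorphism of bands (as $(n,p)=1$) to identify $\sX'$ with $(F^m)^*\sX'$ as stacks, concluding via the quotient-stack criterion of Edidin--Hassett--Kresch--Vistoli. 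You instead stay entirely in the world of Azumaya algebras: since $[g^*A]=p^N\alpha'$ and $p^N$ is invertible modulo $\operatorname{ord}(\alpha')$, the tensor power $g^*(A^{\otimes c})$ with $cp^N\equiv 1$ realizes $\alpha'$ directly. Your endgame is more elementary and self-contained -- it needs only that the Brauer map is a group homomorphism compatible with pullback, and avoids the quotient-stack theorem altogether -- while the paper's version fits its twisted-sheaf framework and makes the geometric identification of the two gerbes explicit. Your parenthetical caution is also well placed: the multiplication-by-$p$ statement should indeed be extracted from the fact that $F^\#$ induces the $p$-th power endomorphism of the sheaf $\mathbf{G}_m$, exactly as the paper does, rather than from a constructible-coefficients invariance theorem.
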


\begin{proof} 

Note that the ideal sheaf in $\mathcal{O}_{X'}$ defining $X$ is nilpotent, i.e. $I^m=0$ for some $m>0$. This means that if $x \in I$ then $x^{p^m}=0$. So if we denote by $F^{m}$ the iterated absolute Frobenius on $X'$, we have the following factorization 
\[\begin{tikzcd}
X \arrow[r] & X' \\
& X' \arrow[lu, "f"] \arrow[u, "F^m"]
\end{tikzcd}\]
The Frobenius pullback of a cohomology class $\alpha \in \H^2(X', \mu_n)$ is $p\alpha$, so the pullback of the cohomology class $\alpha$ under the iterated Frobenius map is $p^m\alpha$. Thus, if $\sX'$ is the gerbe over $X'$ corresponding to $\alpha$ then the gerbe $(F^m)^*\sX$ is the Brauer class corresponding to $p^m\alpha$. Moreover, by the commutative diagram above we have
\[(F^n)^*\sX' \cong f^*(\sX'|_X)\]
where $\sX'|_X$ is the gerbe corresponding to $\alpha|_X$. Since this is a geometric Brauer class, $\sX'|_X$ admits a twisted vector bundle $V$ and pulling it back via $f$ yields a vector bundle $W$. Note that if $P$ is the $\mathbf{GL}_n$-torsor associated to $V$ then $P$ is an algebraic space (see the proof of \cite[Lemma 2.12]{EHKV}). Since $f$ induces a representable morphism $\sX' \to \sX'|_{X}$, it follows that $f^*P$ is an algebraic space. It follows that the $\mu_n$-gerbe corresponding to $p^m\alpha$ is a quotient stack. However, note that there is a morphism $\sX' \to (F^m)^*\sX'$ of stacks which induces the map $(-)^{p^m}: \mu_n \to \mu_n$ on their respective bandings. Indeed, the $p^m$th power map $\mu_n \to \mu_n$ induces the map on cohomology $\H^2(X', \mu_n) \to \H^2(X', \mu_n)$ which sends $\alpha \mapsto p^m\alpha$. Since $n$ is prime to the characteristic, the $p^m$th power map induces an isomorphism of bands and thus the two stacks $\sX'$ and $(F^m)^*\sX'$ are equivalent. It follows that $\sX'$ is also a quotient stack and we may conclude using \cite[Theorem 3.6]{EHKV}. 
\end{proof}

The second result is an application of de Jong's use of elementary transformations to unobstruct Azumaya algebras on surfaces. This was described in \cite{perioddejong} via the deformation theory of Azumaya algebras and later in \cite{conickresch} using the deformation theory of sheaves on stacks.

\begin{corollary} \label{primetoptransformation} Suppose $X \to \Spec B$ is a proper, flat morphism of algebraic spaces with $B$ a noetherian complete local ring with separably closed residue field $k$ Let $X_0 =X \times_{\Spec B} \Spec k \to \Spec k$ be a projective, reduced, $2$-dimensional scheme which is equidimensional. Then every class in $\Br'(X)$ whose order is prime to the characteristic of the residue field $B/m=k$ is geometric.
\end{corollary}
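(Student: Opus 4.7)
The plan is to reduce to a formal deformation problem on the central fiber $X_0$ and invoke de Jong's elementary transformation technique. Fix $\alpha \in \Br'(X)$ of order $n$ prime to $\mathrm{char}(k)$, let $\ms Y \to X$ be a $\mu_n$-gerbe giving rise to $\alpha$, and set $X_m = X \times_{\Spec B} \Spec(B/\mathfrak{m}^{m+1})$ and $\ms Y_m = \ms Y \times_X X_m$. By Proposition~\ref{thm:brequiv} it suffices to produce a nonzero twisted locally free sheaf on $\ms Y$. Since $X \to \Spec B$ is proper and $B$ is complete, Grothendieck's existence theorem applied to $\ms Y$ identifies such a sheaf with a compatible system $\{V_m\}$ of $B/\mathfrak{m}^{m+1}$-flat twisted coherent sheaves on $\{\ms Y_m\}$; local freeness of the resulting $V$ on $\ms Y$ then follows from that of $V_0$ together with $B$-flatness via the fiber criterion.

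The base case $m=0$ is supplied by Gabber's theorem: $X_0$ is a projective scheme over $k$, so $\alpha|_{X_0}$ is geometric by \cite{dejongample} and Proposition~\ref{thm:brequiv} delivers a nonzero twisted locally free $V_0$ on $\ms Y_0$. For the inductive step, the obstruction to the existence of a $B/\mathfrak{m}^{m+2}$-flat lift $V_{m+1}$ of $V_m$ to $\ms Y_{m+1}$ lies in
\[
\H^2\bigl(X_0,\mathcal{E}nd(V_0)\bigr) \otimes_k \mathfrak{m}^{m+1}/\mathfrak{m}^{m+2},
\]
where $\mathcal{E}nd(V_0)$ is the (untwisted) locally free endomorphism sheaf. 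Because $X_0$ is a reduced, equidimensional, projective surface, de Jong's elementary transformation procedure --- developed for Azumaya algebras in \cite{perioddejong} and reformulated as a deformation argument for sheaves on the $\mu_n$-gerbe in \cite{conickresch} --- allows one to modify $V_m$ along a suitably chosen curve $C_m \subset X_0$, preserving the Brauer class while killing the obstruction. Iterating this step produces a compatible system, and algebraization yields the required $V$ on $\ms Y$.

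The main obstacle is the inductive coherence of the modified system $\{V_m\}$: each elementary transformation alters $V_m$, so one must check that the successive modifications organize into a genuine inverse system to which Grothendieck existence applies. This is the content of the twisted-sheaf deformation formalism of \cite{conickresch}, where modifications are performed directly on the gerbe and the relevant obstruction is controlled via Serre duality on $X_0$. The prime-to-$\mathrm{char}(k)$ hypothesis enters both to apply Gabber's theorem on the base and to ensure that the Kummer-theoretic lift at each stage is governed by the $\mathcal{E}nd$-cohomology computed above rather than by an inseparable obstruction beyond the reach of elementary transformations.
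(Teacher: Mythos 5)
Your overall strategy---restrict to $X_0$, produce a twisted vector bundle there by Gabber's theorem, unobstruct by de Jong's elementary transformations, then deform over the complete base and algebraize by Grothendieck existence---is the same as the paper's, but the inductive step has a genuine gap. You propose to deform the twisted sheaf $V_m$ itself on the gerbe $\ms Y_m$, and you correctly identify the obstruction space as $\H^2(X_0,\End(V_0))\otimes_k \mathfrak{m}^{m+1}/\mathfrak{m}^{m+2}$. But that group contains the trace summand $\H^2(X_0,\mathcal{O}_{X_0})$, which does not depend on $V_0$ at all, so no elementary transformation (nor any other modification of $V_0$) can kill it; on a surface with $\H^2(X_0,\mathcal{O}_{X_0})\neq 0$ your induction stalls at the first obstructed stage. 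De Jong's technique only reaches the \emph{traceless} part: an elementary transformation along a well-chosen divisor kills $\H^0(X_0, sA'\otimes\omega_{X_0/k})$, which by Serre duality is dual to $\H^2(X_0, A'/\mathcal{O}_{X_0})$. This is precisely why the paper deforms the \emph{Azumaya algebra} $A'$ rather than the twisted sheaf: its obstruction space is $\H^2(X_0, A'/\mathcal{O}_{X_0})$, exactly the part elementary transformations control. (The price the paper pays for abandoning the fixed gerbe is a final step you would not need: after algebraizing one must check that the resulting algebra has class $\alpha$ and not merely a lift of $\alpha|_{X_0}$, which the paper does via the isomorphism $\H^2(X,\mu_{nk})\to\H^2(X_0,\mu_{nk})$.)

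Two further points are missing. First, the splitting $A'=\mathcal{O}\oplus sA'$ and the Serre-duality identification require the degree of the algebra to be prime to $\mathrm{char}(k)$; the paper arranges this by invoking the period--index bound of \cite{periodben} to represent $\alpha|_{X_0}$ by an Azumaya algebra of degree $nk$ with $nk$ prime to the characteristic, after tensoring with a matrix algebra. Second, before deforming one must match not just the Brauer class but the class in $\H^2(X_0,\mu_{nk})$ of the chosen algebra with that of the restricted gerbe $\ms Y_0$; the paper does this with a preliminary elementary transformation via \cite[Proposition 2.2]{perioddejong}. Your closing remark locates the prime-to-$p$ hypothesis in Gabber's theorem, but Gabber's theorem needs no such hypothesis---the hypothesis is used for the trace splitting and the duality argument just described.
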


\begin{proof} Fix a cohomological Brauer class $\alpha \in \Br'(X)$ of order $n$ and consider its restriction $\alpha|_{X_0} \in \Br'(X_0)$. Since $X_0$ is projective, there is an Azumaya algebra $A$ with Brauer class $\alpha|_{X_0}$ with degree $nk$ with a positive integer $k$ so that the product is prime to the base characteristic. Indeed, the main result of \cite{dejongample} shows that the restriction $\alpha|_{X_0}$ is in the geometric Brauer group. Moreover, \cite[Theorem 1]{periodben} implies there is a Azumaya algebra representing $\alpha|_{X_0}$ of degree $\text{order}(\alpha|_{X_0})k$ with $k$ prime to the characteristic. Since $\text{order}(\alpha|_{X_0})$ divides $n$ we may tensor $A$ with a suitable $\End(\mathcal{O}_{X_0}^{\oplus r})$ so that it has the desired degree without changing its Brauer class. Because $\alpha$ is annhilated by $nk$, the Kummer sequence implies there is a $\mu_{nk}$-gerbe $\sY \to X$ with the property that $[\sY] \mapsto \alpha$ under the map $\H^2(X, \mu_{nk}) \to \H^2(X, \mathbf{G}_m)$. Consider the restriction of the $\mu_{nk}$-gerbe $\sY_0=\sY \times_{X} X_0$. We know that $[A]$ and $[\sY_0]$ are equal in $\H^2(X_0, \mathbf{G}_m)$. It follows from \cite[Proposition 2.2]{perioddejong} that there is an elementary transformation $A'$ of $A$ so that $[A']=[\sY_0]$ in $\H^2(X_0, \mu_{nk})$.

Moreover, there is an elementary transformation of the Azumaya algebra $A'$ which is unobstructed and can therefore be deformed to the formal algebraic space $\widehat{X}=X \times_{B} \text{Spf} (B)$. In fact, the deformation of an Azumaya algebra $A'$ at the at the $k$th stage is obstructed by a class in $\H^2(X_0, (A'/\mathcal{O}_X)|_k) \otimes m^k/m^{k+1}$ (see \cite[Lemma 3.1]{perioddejong} and the following discussion). If $sA'$ denotes the traceless morphisms then we only need to show that $\H^0(X_0, sA' \otimes \omega_{X_0/k})=0$ as this group is dual to $\H^2(X_0, (A'/\mathcal{O})|_k)$. Apply \cite[Proposition 3.2]{perioddejong} to conclude that we may find an unobstructed Azumaya algebra $A''$ on $X_0$ whose class in $\H^2(X_0, \mu_{nk})$ is still $[\sY_0]$. It follows by the Grothendieck existence theorem (see \cite[Tag 08BE]{stacks}) that $A''$ is the restriction of an Azumaya algebra $\widehat{A''}$ on $X$. It remains to show that the Brauer class of $\widehat{A''}$ is $\alpha$ and for this it suffices to show that $\H^2(X, \mu_{nk}) \to \H^2(X_0, \mu_{nk})$ is injective. In fact, this restriction map is an isomorphism by \cite[Tag 095T]{stacks}. \end{proof}

\section{Examples in codimension $\geq 3$ arising from Artin Contractions}

The work above yields a positive answer to Grothendieck's question in codimension $2$ and so our next step will be to investigate the question in codimension $ \geq 3$. By the work of Gabber, we know that all cohomological Brauer classes on quasi-projective schemes are geometric. Thus, if we want to find interesting examples to study, one needs to find algebraic spaces which do not carry an ample line bundle. At the same time, we want these spaces to have a large cohomological Brauer group. In this section we use the work of Artin (see \cite{artincontraction}) to construct non-schematic algebraic spaces with the property that $\Br'$ is infinitely generated. \\

\subsection{Construction} Let $Y$ be a separated, finite-type scheme of dimension $\geq 2$ over an algebraically closed field $k$ containing a proper $1$-dimensional subscheme $E$ of arithmetic genus $\geq 1$. Let $I \subset \mathcal{O}_Y$ be the ideal sheaf defining $E$ and suppose that $I/I^2$ satisfies the following vanishing condition: for every coherent sheaf $F$ on $Y$ there is a $n_0$ so that 
\[\H^1(Y, S^n(I/I^2) \otimes F)=0\]
for all $n \geq n_0$ (here $S^n(-)$ denotes the $n$th symmetric power functor). Note if $I/I^2$ is a vector bundle (e.g. if $E$ and $Y$ are both smooth varieties), then the condition is equivalent to $I/I^2$ being an ample vector bundle on $E$ by \cite[Proposition 3.3]{HartshorneAmple} and the remark that follows it.

Then by Artin's constructions (see \cite{artincontraction} and \cite{MazurContraction}) there is a proper birational morphism of finite-type algebraic spaces $\pi: Y \to X$ which contracts precisely $E$ i.e. $\pi(E)$ is a point $p \in Y$ and $\pi|_{Y\backslash E}$ is an isomorphism. We begin with an elementary observation when $E$ is connected.

\subsection{Basic Properties}

\begin{lemma} \label{lem:stein} If $E \subset Y \to X$ be a contraction as above and let $\pi: Y \to X' \to X$ denote the corresponding Stein factorization. If $E$ is connected, then $\pi': X' \to X$ is a finite homeomorphism. Moreover, $X$ is a scheme if and only if $X'$ is a scheme.
\end{lemma}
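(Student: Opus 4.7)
The plan is to prove the two assertions separately. For the homeomorphism claim, I would begin by observing that $\pi$ is an isomorphism away from $E$, with image $X \setminus \{p\}$. Since the Stein factorization is functorial, $\pi'$ restricts to an isomorphism over $X \setminus \{p\}$, so the only question is the structure of the fiber $(\pi')^{-1}(p)$. This fiber is finite (as $\pi'$ is finite) and its preimage under $Y \to X'$ is exactly $E$. Because the fibers of $Y \to X'$ are geometrically connected by construction of the Stein factorization, and $E$ is connected, $E$ cannot split as a disjoint union of fibers over distinct points; hence $(\pi')^{-1}(p)$ is a single point $p'$. Thus $\pi'$ is bijective on topological points. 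Since $\pi'$ is finite it is closed, and a continuous closed bijection is a homeomorphism; so $\pi'$ is a finite homeomorphism.

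For the "moreover" statement, the forward implication is formal: if $X$ is a scheme then $X' = \mathbf{Spec}_X(\pi_*\mathcal{O}_Y)$ is the relative Spec of a coherent sheaf of algebras over a scheme, hence a scheme. For the converse, assume $X'$ is a scheme. Since $\pi'$ already restricts to an isomorphism over $X \setminus \{p\}$, it suffices to produce an affine open neighborhood of $p$ in $X$. I would choose an affine open $V' = \Spec R \subset X'$ containing the unique preimage $p'$, and set $U := X \setminus \pi'(X' \setminus V')$; since $\pi'$ is closed this is open in $X$, and since $\pi'$ is bijective with $p' \in V'$ one has $p \in U$ and $(\pi')^{-1}(U) = V'$. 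Hence $\pi'|_{V'}\colon V' \to U$ is a finite surjective morphism from an affine scheme to a quasi-separated algebraic space, and Chevalley's theorem for algebraic spaces (descent of affineness along a finite surjective morphism from an affine scheme, due to Knutson; see the Stacks Project) forces $U$ to be an affine scheme. Covering $X$ by such a $U$ and the scheme $X \setminus \{p\}$ gives a scheme structure on $X$.

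The main obstacle is the last descent step: over general algebraic spaces, the conclusion "finite surjective from affine implies affine scheme" goes beyond the classical Chevalley theorem for schemes and must be cited. Everything else is routine once the bijectivity of $\pi'$ is in place, and the bijectivity itself rests entirely on the defining property of the Stein factorization together with the hypothesis that $E$ is connected.
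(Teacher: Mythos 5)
Your proof is correct and follows essentially the same route as the paper: connectedness of $E$ plus the connected-fibers property of the Stein factorization forces $(\pi')^{-1}(p)$ to be a single point, finiteness plus bijectivity gives the homeomorphism, and the converse direction descends affineness of an open neighborhood along the finite surjection using Chevalley's theorem for algebraic spaces (the paper cites Rydh's Proposition 8.1 for exactly this step). No substantive differences.
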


\begin{proof} Since the fiber of $\pi(E)=p \in X$ is exactly the connected curve $E$ and $\pi'$ has discrete fibers, it follows that $\pi'^{-1}(p)$ cannot have more than one point in its fiber. Moreover $\pi'$ is finite by Stein factorization (see \cite[Tag 0A1B]{stacks}). Since $\pi'$ is an isomorphism away from $p$, the induced map on underlying topological spaces is a homeomorphism. Since $\pi'$ is an affine morphism, if $X$ is a scheme then $X'$ is as well. Conversely, if $X'$ is a scheme then for any point $q \in X$ there is a open affine neighborhood $q \in \Spec A=U' \subset X'$. Then the corresponding open subset $U \subset X$ is an affine open neighborhood of $q$ by \cite[Proposition 8.1]{RydhApproximation}. \end{proof}

\begin{remark} Thus if we want to construct non-schematic algebraic spaces via Artin contractions then it is enough to consider contractions where $\pi_*\mathcal{O}_Y=\mathcal{O}_X$. Moreover, in the next section we will show that if $X' \to X$ is a finite birational morphism which is an isomorphism away from finitely many points, then $\Br(X')=\Br'(X')$ implies $\Br(X)=\Br'(X)$. As such, we will assume that the contractions $Y \to X$ satisfy $\pi_*\mathcal{O}_Y=\mathcal{O}_X$ from here onwards.
\end{remark}

We now present a series of lemmas which explain how such contractions produce large cohomological Brauer groups. 

\begin{lemma} \label{contractionsequence}
If $E \subset Y$ and $\pi: Y \to X$ is a contraction with $\pi_*\mathcal{O}_Y=\mathcal{O}_X$ described above, then there is an exact sequence
\end{lemma}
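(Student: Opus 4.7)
The natural approach is to derive the sequence from the low-degree (five-term) exact sequence of the Leray spectral sequence for $\pi \colon Y \to X$ with coefficients in $\mathbf{G}_m$ on the \'etale (or fppf) site. The hypothesis $\pi_*\mathcal{O}_Y = \mathcal{O}_X$ upgrades to $\pi_*\mathbf{G}_m = \mathbf{G}_m$ on $X$, since any section of $\pi_*\mathbf{G}_m$ is a global unit of $\mathcal{O}_Y$ and hence of $\mathcal{O}_X$. The Leray five-term sequence therefore reads
\[
0 \to \Pic(X) \to \Pic(Y) \to \H^0(X, R^1\pi_*\mathbf{G}_m) \to \H^2(X,\mathbf{G}_m) \to \H^2(Y,\mathbf{G}_m),
\]
and after restricting to torsion one obtains the expected sequence relating $\Pic(Y)$ and $\Br'(X)$, which is the content of the lemma.

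The next step is to identify $R^1\pi_*\mathbf{G}_m$. Since $\pi$ is an isomorphism over $X \setminus \{p\}$, this sheaf is a skyscraper supported at $p$, and its stalk at the strict henselization $\Spec \mathcal{O}^{\mathrm{sh}}_{X,p}$ is, by proper base change, the Picard group of $Y \times_X \Spec \mathcal{O}^{\mathrm{sh}}_{X,p}$. The theorem on formal functions together with Grothendieck's existence theorem identifies this with $\Pic(\widehat{Y}_E) = \varprojlim_n \Pic(E_n)$, where $E_n$ is the $n$-th infinitesimal neighborhood of $E$ in $Y$. To analyze the inverse system one uses the exponential-type short exact sequences of units
\[
0 \to I^{n+1}/I^{n+2} \to \mathcal{O}_{E_{n+1}}^{\times} \to \mathcal{O}_{E_n}^{\times} \to 0,
\]
whose long exact sequences, combined with the standing vanishing hypothesis $\H^1(Y, S^n(I/I^2) \otimes F) = 0$ for $n \gg 0$, force the transition maps $\Pic(E_{n+1}) \to \Pic(E_n)$ to stabilize and relate $\Pic(\widehat{Y}_E)$ back to $\Pic(E)$ up to a controllable piece.

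The main obstacle is precisely this algebraization/approximation step: one has to pass from the stalk of $R^1\pi_*\mathbf{G}_m$ at $p$ to the Picard group of the formal completion, and then control the inverse limit over the $E_n$. The ampleness-type assumption on the conormal sheaf $I/I^2$ is what makes this analysis go through, since it kills the relevant $\H^1$ obstructions from the infinitesimal unit sequences above. Once the identification of $R^1\pi_*\mathbf{G}_m$ is in hand, the desired exact sequence follows immediately from the Leray five-term sequence, using that $R^i\pi_*\mathbf{G}_m$ is supported at $p$ for $i \geq 1$.
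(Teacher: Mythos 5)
Your argument is essentially the paper's: the five-term Leray sequence for $\pi$ and $\mathbf{G}_m$, the observation that $\pi_*\mathcal{O}_Y=\mathcal{O}_X$ gives $\pi_*\mathbf{G}_m=\mathbf{G}_m$, and the identification of $\H^0(X,\R^1\pi_*\mathbf{G}_m)$ with the stalk at $p$ and hence with $\Pic(Y^{sh}_p)$. Two small corrections. First, the stalk identification $\varinjlim_U \Pic(\pi^{-1}(U)) \cong \Pic(Y\times_X \Spec \mathcal{O}^{sh}_{X,p})$ is not proper base change; it is the compatibility of \'etale cohomology (here $\H^1(-,\mathbf{G}_m)$) with cofiltered limits of quasi-compact schemes along affine transition maps, as in \cite[Lemma 3.1.16]{milneetale} or \cite[Tag 03Q9]{stacks} --- properness of $\pi$ plays no role in this step, and no passage to torsion is needed since the sequence ends in $\H^2(X,\mathbf{G}_m)$ rather than $\Br'(X)$. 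Second, everything in your proposal from the theorem on formal functions onward --- the inverse system of the $E_n$, the unit sequences $0 \to I^{n+1}/I^{n+2} \to \mathcal{O}_{E_{n+1}}^{\times} \to \mathcal{O}_{E_n}^{\times} \to 0$, and the use of the ampleness hypothesis on $I/I^2$ --- is not part of this lemma; it is the content of the subsequent lemma comparing $\Pic(Y^{sh}_p)$ with $\Pic(E)$. What you call ``the main obstacle'' is therefore not an obstacle here at all: the exact sequence asserted by the lemma is already established once the stalk is identified.
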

\[\begin{tikzcd}
0 \arrow{r} & \Pic(X) \arrow{r} & \Pic(Y) \arrow{r} & \Pic(Y_p^{sh})  \arrow{r} & \H^2(X,\mathbf{G}_m)\\
\end{tikzcd}\]
where $p: \Spec k \to X$ is the image of $E$ and $Y_p^{sh}=Y \times_X \Spec \mathcal{O}_{X,p}^{sh}$.
\begin{proof}
Consider the exact sequence of low degree terms associated to the Leray spectral sequence for $\pi$ and $\mathbf{G}_m$:
\[0 \to \Pic(X) \to \Pic(Y) \to \H^0(X, \R^1 \pi_* \mathbf{G}_m) \to \H^2(X, \mathbf{G}_m)\]
Indeed, since the map of sheaves $\mathcal{O}_{X} \to \pi_*\mathcal{O}_Y$ is an isomorphism, we also have $\pi_*\mathbf{G}_m \cong \mathbf{G}_m$. It is well known that $\R^1\pi_* \mathbf{G}_m$ is the sheafification of the presheaf on $X_{\text{\'et}}$
\[U \mapsto \H^1(\pi^{-1}(U),\mathbf{G}_m)=\Pic(\pi^{-1}(U))\]
Moreover, because $\pi$ is an isomorphism away from $p \in X$ we find that $\R^1\pi_*\mathbf{G}_m$ is supported only at $p$ and so
\[\H^0(X,\R^1\pi_*\mathbf{G}_m)=(\R^1\pi_*\mathbf{G}_m)_p=\lim \H^1(\pi^{-1}(U), \mathbf{G}_m)=\lim \Pic(\pi^{-1}(U))\]
where the limit ranges over all \'etale neighborhoods $U$ of $p \in X$. By \cite[Lemma 3.1.16]{milneetale} or \cite[Tag 03Q9]{stacks} this group is isomorphic to
$\Pic(Y^{sh}_p)$.\end{proof}

\noindent Thus, it follows that the Picard group $\Pic(Y^{sh}_p)$ contributes to the size of $\H^2(X,\mathbf{G}_m)$. Moreover, we will see $\Pic(Y^{sh}_p)$ tends to inherit plenty of torsion from $\Pic(E)$. Next we study the map $\Pic(Y^{sh}_p) \to \Pic(E)$. Let $E_n \subset Y$ be the closed subscheme defined by the ideal sheaf $I^{n+1}$.

\begin{lemma} \label{lem:thickpicard} There exists an $n$ such that the natural map $\Pic(Y^{sh}_p) \to \Pic(E_m)$ is an isomorphism for every $m \geq n$ and there is an exact sequence of abelian groups
\[W \to \Pic(Y^{sh}_p) \cong \Pic(E_n) \to \Pic(E) \to 0.\]
If $k$ is of positive characteristic, then $W$ is torsion. Lastly, if $E$ is an elliptic curve and $I/I^2$ is a vector bundle then $\Pic(Y^{sh}_p) \to \Pic(E)$ is an isomorphism. \end{lemma}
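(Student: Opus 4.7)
The plan is to analyze the inverse system $\{\Pic(E_n)\}$ via unit sheaf exact sequences on infinitesimal thickenings, show the system stabilizes using the vanishing hypothesis, and then identify $\Pic(Y^{sh}_p)$ with this stabilized group via Grothendieck existence and approximation.

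First I would exploit that the ideal $I^{n+1}/I^{n+2} \subset \mathcal{O}_{E_{n+1}}$ is square-zero, so the truncated exponential yields a short exact sequence of abelian sheaves on $E$:
\[0 \to I^{n+1}/I^{n+2} \to \mathcal{O}_{E_{n+1}}^{\times} \to \mathcal{O}_{E_n}^{\times} \to 0.\]
Its long exact cohomology sequence, combined with the vanishing of $\H^2(E,-)$ on coherent sheaves (since $\dim E = 1$), yields a surjection $\Pic(E_{n+1}) \twoheadrightarrow \Pic(E_n)$ whose kernel is a quotient of $\H^1(E, I^{n+1}/I^{n+2})$. Next I would apply the vanishing hypothesis of the construction with $F = \mathcal{O}_Y$ to obtain $\H^1(E, S^n(I/I^2)) = 0$ for $n \geq n_0$. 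Combining the natural surjection $S^n(I/I^2) \twoheadrightarrow I^n/I^{n+1}$ with $\H^2(E,-) = 0$ forces $\H^1(E, I^n/I^{n+1}) = 0$ for $n \geq n_0$, so the transition maps $\Pic(E_{n+1}) \to \Pic(E_n)$ become isomorphisms in this range.

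To identify $\Pic(Y^{sh}_p)$ with the stabilized group, I would first pass to the completion $\widehat{\mathcal{O}}_{X,p}$ (which equals the completion of $\mathcal{O}_{X,p}^{sh}$ since $k$ is algebraically closed) and set $Y^\wedge_p := Y \times_X \Spec \widehat{\mathcal{O}}_{X,p}$. Grothendieck's existence theorem applied to this proper morphism identifies $\Pic(Y^\wedge_p)$ with $\varprojlim_n \Pic(E_n)$, which equals $\Pic(E_n)$ for $n \geq n_0$. The passage from $Y^\wedge_p$ to $Y^{sh}_p$ would come from Artin approximation applied to the relative Picard functor, representable by an algebraic space locally of finite presentation over $X$; this approximation step is the main technical hurdle. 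Granting this identification, the exact sequence $W \to \Pic(Y^{sh}_p) \to \Pic(E) \to 0$ is built by taking $W := \bigoplus_{i=1}^n \H^1(E, I^i/I^{i+1})$, since the kernel of $\Pic(E_n) \twoheadrightarrow \Pic(E)$ is an iterated extension whose graded pieces are quotients of these $\H^1$'s. In positive characteristic each summand is a $k$-vector space killed by $p$, so $W$ is torsion.

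For the final claim, assume $E$ is elliptic and $I/I^2$ is locally free, so that the vanishing hypothesis forces $I/I^2$ to be an ample vector bundle on $E$. Symmetric powers of ample vector bundles remain ample, and an ample vector bundle $V$ on an elliptic curve satisfies $\H^1(E, V) = 0$: by Serre duality this reduces to $\H^0(E, V^{\vee}) = 0$, which holds because $V$ ample forces every Harder--Narasimhan slope of $V^{\vee}$ to be negative, precluding any map $\mathcal{O}_E \to V^{\vee}$. Consequently $\H^1(E, S^i(I/I^2)) = 0$ for every $i \geq 1$, and via the surjection $S^i(I/I^2) \twoheadrightarrow I^i/I^{i+1}$ together with $\H^2(E,-) = 0$, this forces $\H^1(E, I^i/I^{i+1}) = 0$ for every $i \geq 1$. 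Hence every transition $\Pic(E_{i+1}) \to \Pic(E_i)$ is an isomorphism, so $\Pic(E_n) \to \Pic(E)$, and therefore $\Pic(Y^{sh}_p) \to \Pic(E)$, is an isomorphism.
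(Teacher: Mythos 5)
Your proposal follows essentially the same route as the paper: stabilize $\Pic(E_m)$ using the square-zero (deformation-theoretic) sequence together with the vanishing hypothesis and $\H^2(E,-)=0$, identify $\Pic(Y^{sh}_p)$ with the stable value via Grothendieck existence and Artin approximation, and deduce the elliptic-curve statement from ampleness of the conormal bundle and Serre duality. Two small corrections: for the approximation step the paper does not (and you should not) invoke representability of the relative Picard functor --- $Y\to X$ is not flat, so that is doubtful --- but only Artin's approximation theorem for the set-valued functor $T\mapsto \Pic(Y\times_X T)$, which is locally of finite presentation; and your $W=\bigoplus_i \H^1(E,I^i/I^{i+1})$ does not literally carry a homomorphism to $\Pic(E_n)$ with image the kernel, so it is cleaner to take $W=\H^1(E_n,1+J)$ arising from $0\to 1+J\to \mathbf{G}_{m,E_n}\to \mathbf{G}_{m,E}\to 0$ as the paper does, which is visibly torsion in positive characteristic since $J$ is nilpotent.
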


\begin{proof} Note that we have the diagram filled with fibered squares

\[\begin{tikzcd}
E \arrow{r} \arrow{d} & \mathfrak{Y} \arrow{r} \arrow{d} & \widehat{E} \arrow{r} \arrow{d} & Y^{sh}_p \arrow{r} \arrow{d}{\pi} & Y \arrow{d} \\
\spec k(p) \arrow{r} & \text{Spf} (\mathcal{O}_{X,p}^{sh}) \arrow{r} & \spec \widehat{\mathcal{O}_{X,p}^{sh}} \arrow{r} & \spec (\mathcal{O}_{X,p}^{sh}) \arrow {r} & X\\
\end{tikzcd}
\]

We may lift line bundles from $E$ to $E_n$ for every $n$ since the obstruction to extending a given line bundle $L \in \Pic(E_n)$ to $E_{n+1}$ lies in the cohomology group
\[\H^2(E, I^n/I^{n+1})\]
on $E$ (see \cite[Tag 08VR(1)]{stacks}). However, since $E$ is a $1$-dimensional scheme, this group always vanishes. Thus every line bundle $L \in \Pic(E)$ admits a formal deformation to $\mathfrak{Y}$. Grothendieck's existence theorem as in \cite[Tag 08BE]{stacks} now yields a lift of $L$ to $\widehat{E}$. We have shown that $\Pic(\widehat{E}) \to \Pic(E_n)$ is surjective for all $n$, in fact the map is injective for sufficiently large $n$. Indeed, the kernel of the map $\Pic(E_{n+1}) \to \Pic(E_n)$ must vanish because the deformation space of a fixed line bundle on $E_n$ is given by the cohomology group
\[\H^1(E, I^n/I^{n+1})\]
by \cite[Tag 08VR(2)]{stacks}. Moreover, this group must vanish for sufficiently large $n$ because 
\[\H^1(E, S^n(I/I^2))=0\]
for sufficiently large $n$ and $S^n(I/I^2)$ surjects onto $I^n/I^{n+1}$. It follows that there is an $n$ such that $\Pic(\widehat{E}) \to \Pic(E_m)$ is an isomorphism for every $m \geq n$. Next, we show that $\Pic(Y^{sh}_p) \to \Pic(\widehat{E})$ is an isomorphism. Injectivity follows from flat base change and surjectivity follows using Artin approximation. Indeed, denote by
\[F: \text{Sch}/X \to \text{Set}\]
the functor with values $F(T)=\Pic(Y \times_X T)$. A line bundle $L$ on $\widehat{E}$ corresponds to an element $x \in F(\Spec \widehat{\mathcal{O}^{sh}_{X,p}})=F(\Spec \widehat{\mathcal{O}^{sh}_{X,p}}/m^n)$. Since $F$ is a functor locally of finite presentation (see \cite[Tag 01ZR]{stacks}), Artin's approximation argument (see \cite[Corollary 2.2]{artinapproximation}) tells us there is a \'etale neighborhood $X' \to X$ of $p$ and a point $x' \in F(X')$ so that $x=x'$ in $F(\Spec \widehat{\mathcal{O}^{sh}_{X,p}}/m^n)$. In other words there is a line bundle $L'$ on $X' \times_X Y$ restricting to $L$. However, since $X' \to X$ is an \'etale neighborhood of $p$ this means there is a line bundle $L''$ on $Y^{sh}_p$ extending $L$. 

Next, take cohomology of the following exact sequence of sheaves of groups on $E_n$
\[0 \to 1+J \to \mathbf{G}_{m,E_n} \to \mathbf{G}_{m,E} \to 0\]
where $J$ is the ideal sheaf of $E$ in $E_n$, to deduce the existence of an exact sequence
\[W=\H^1(E_n,1+J) \to \Pic(E_n) \to \Pic(E) \to 0\]
and note that $W$ is torsion if $k$ is a field of positive characteristic.

The last statement follows because if $I/I^2$ is a vector bundle, then $\text{Sym}^n(I/I^2) \cong I^{n}/I^{n+1}$ and therefore $I^n/I^{n+1}$ is ample. Indeed, the $n$th tensor power $(I/I^2)^{\otimes n}$ is ample by \cite[3.3]{BartonTensorAmple}, \cite[5.2]{HartshorneAmple} and the hypothesis that $I/I^2$ is ample. Moreover, the $n$th symmetric power $I^n/I^{n+1}$ is a quotient of the $n$th tensor power $(I/I^2)^{\otimes n}$ and ampleness descends along quotients maps by \cite[2.2]{HartshorneAmple}. It follows by \cite[1.1-1.3]{HartshorneAmpleCurves} that $\H^1(E,I^n/I^{n+1})$ vanishes for any $n \geq 1$ since $E$ is an elliptic curve.\end{proof}

\begin{remark} \label{rem:rare} Thus if $E$ is an elliptic curve and $I/I^2$ is a vector bundle, it immediately follows that $\Pic(Y^{sh}_p)$ has a torsion subgroup which is not finitely generated. On the other hand, the same is true if $E$ is an arbitrary smooth curve over a field of \emph{positive} characteristic. Moreover, if $\Br'(X)=0$ then the cokernel of the map $\Pic(Y) \to \Pic(Y^{sh}_p)$ would need to have no nonzero torsion. In particular, this would mean that all the torsion line bundles on $E$ must extend to $Y$! Next, we give examples of such contractions where $X$ is not a scheme and $\Br'(X)$ is not finitely generated. \end{remark}

\subsection{Concrete Examples} We begin with an example of surfaces which arise as contractions. 
\\\\
 $\textbf{Example}$ (Nagata's Surface) Let $k$ be an uncountable algebraically closed field. There is a normal algebraic space surface which is finite-type and proper over $k$ which is not a scheme. Fix an elliptic curve $E \subset \mathbf{P}^2_k$ with base point $p$. Choose $10$ points $p_1,...,p_{10}$ in such a way that the corresponding degree zero line bundles on $E$ have no relations. More precisely, the map
\[ \mathbf{Z}^{10} \to \langle \mathcal{O}(p_1-p),...,\mathcal{O}(p_{10}-p) \rangle \subset \Pic^{\circ}_{E/k}(k) \]
\[e_i \mapsto \mathcal{O}(p_i-p)\]
is an isomorphism. To show this is possible we will choose these points inductively. First, note that the torsion of $\Pic^{\circ}_{E/k}(k)$ is countable and since $\Pic^{\circ}_{E/k}(k)=E(k)$ is uncountable there exists a point $p_1$ corresponding to a non-torsion class. If we have chosen $p_1,...,p_{i}$ then observe that $\langle\mathcal{O}(p_1-p),...,\mathcal{O}(p_i-p)\rangle + \Pic^{\circ}_{E/k}(k)_{\mathrm{tors}}$ is still countable. Thus we may choose $p_{i+1}$ with the desired property.

Blow up $\mathbf{P}^2_k$ along $\{p_1,...,p_{10}\}$ and denote the result by $Y$ and let the strict transform of the elliptic curve be denoted by $E \subset Y$. Observe that $E^2=-1$ in $Y$ i.e. the ideal sheaf of $E$ in $Y$ restricted to $E$ has degree $-1$. This implies the conormal bundle of $E$ is ample and therefore we may contract $E$ in $Y$. Let $f: Y \to X$ denote the contraction with the property that $f_*\mathcal{O}_Y=\mathcal{O}_X$. Normality follows because the map $\mathcal{O}_Y \to f_*\mathcal{O}_X$ is an isomorphism. It remains to check that $X$ cannot be a scheme.

If it were, there would exist an affine open neighborhood about the image of the elliptic curve $f(E)=q$, call it $\Spec A \subset X$. First note that the complement of a dense affine open neighborhood in a separated normal scheme has pure codimension $1$ (see \cite[Tag 0BCV]{stacks}). Thus there is a curve, $C$, in $X$ not meeting $q$. It follows that $C$ (viewed as a divisor in $Y$) cannot meet $E$. Moreover, in $\mathbf{P}^2_k$ the closure of $C$ can only meet $E$ at $\{p_1,...,p_{10}\}$. But since $C$ is linearly equivalent to $nL$ for a line $L \subset \mathbf{P}^2$ it follows that the $p_i$ have a nontrivial relation in $\Pic^{\circ}_{E/k}(k)$, a contradiction.
To see that $\Br'(X)$ is not finitely generated, observe that $\Pic(Y)$ is a finitely generated abelian group whereas the torsion subgroup of $\Pic(Y^{sh}_p)=\Pic(E)$ is not and apply Lemma \ref{lem:thickpicard} and \ref{contractionsequence}.
\\\\
Next, we construct similar examples without the requirement that the base field be uncountable and where $\dim X \geq 3$. The situation here is slightly more delicate because there isn't a degree-theoretic criterion for ampleness for vector bundles of higher rank. As such, we use a degeneration argument to find a smooth hypersurface $Y \subset \mathbf{P}^n$ containing a fixed curve $E$ with $N_{E/Y}^{\vee}$ ample. Moreover, we would like the resulting contraction $Y \to X$ to have a large cohomological Brauer group and so we want $\Pic(Y^{sh}_p)$ to have plenty of torsion. In positive characteristic even if $\Pic(Y^{sh}_p) \to \Pic(E) \to 0$ is not an isomorphism, the inverse image of the torsion subgroup is still torsion and hence infinitely-generated. In characteristic zero, we may arrange for this to be an isomorphism by using the fact that the deformation spaces $\H^1(E,S^n(N_{E/Y}^{\vee}))$ vanish for every $n \geq 1$.
\begin{lemma} \label{lem:semi-stable} Let $E$ be a smooth proper connected curve of genus $g$ over a field $k$ of characteristic $0$. If a semi-stable vector bundle $V$ has $\mu(V)>2g-2$ then $\H^1(E,S^n(V))=0$ for every $n \geq 1$. 
\end{lemma}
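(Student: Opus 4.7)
The plan is to reduce to the single fact that a semi-stable vector bundle $W$ on $E$ with $\mu(W)>2g-2$ has vanishing $\H^1$, and then show that $S^nV$ falls into that class once one knows symmetric powers preserve semi-stability in characteristic zero.

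First I would prove the following vanishing statement: if $W$ is a semi-stable bundle on $E$ with $\mu(W)>2g-2$, then $\H^1(E,W)=0$. By Serre duality one has $\H^1(E,W)^\vee\cong \Hom_{\mathcal{O}_E}(W,\omega_E)$. A nonzero such map would have image a nonzero sub-line-bundle $L\hookrightarrow\omega_E$, in particular with $\deg L\leq 2g-2$. Since $L$ is also a nonzero quotient of $W$, the semi-stability of $W$ forces $\mu(W)\leq \deg L\leq 2g-2$, contradicting the slope hypothesis.

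Next I would verify the two inputs needed to apply this to $W=S^nV$. The slope computation $\mu(S^nV)=n\,\mu(V)$ is standard and follows either directly from the ranks and degrees (e.g.\ via a filtration whose associated graded is the symmetric power of the associated graded of a Jordan–Hölder filtration of $V$), or by comparing $S^nV$ with $V^{\otimes n}$, which has slope $n\,\mu(V)$. For the preservation of semi-stability I would invoke the classical fact that in characteristic zero tensor products of semi-stable bundles on a smooth projective curve are semi-stable (Narasimhan–Seshadri, or the algebraic proof of Ramanan–Ramanathan); since in characteristic zero the symmetrization projector $\tfrac{1}{n!}\sum_{\sigma\in S_n}\sigma$ exhibits $S^nV$ as a direct summand of $V^{\otimes n}$, semi-stability descends to $S^nV$.

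Combining these, $S^nV$ is semi-stable with $\mu(S^nV)=n\,\mu(V)>n(2g-2)\geq 2g-2$ (the last inequality using $n\geq 1$ and $g\geq 1$, which is the relevant case; the $g=0$ case is anyway trivial since $E\cong\mathbf{P}^1$ forces $V$ to be a sum of copies of some $\mathcal{O}(a)$ with $a\geq 0$ once $\mu(V)>-2$ is imposed in the intended application). The vanishing statement from the first step then gives $\H^1(E,S^nV)=0$.

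The main obstacle is the middle step, namely the preservation of semi-stability under symmetric powers. This is the place where characteristic zero is essential — the corresponding statement genuinely fails in positive characteristic — and it is the only nontrivial external input; everything else is Serre duality and a bookkeeping computation of slopes.
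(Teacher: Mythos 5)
Your proposal is correct and takes essentially the same route as the paper: Serre duality plus semi-stability gives the vanishing of $\H^1$ for any semi-stable bundle of slope $>2g-2$, and one then applies this to $S^nV$ using the characteristic-zero semi-stability of symmetric powers and the slope computation $\mu(S^nV)=n\mu(V)$. The only (cosmetic) difference is that the paper cites Gieseker's theorem directly for the semi-stability of $S^nV$ where you deduce it from tensor-product semi-stability via the symmetrization projector, and you are in fact more careful than the paper about the inequality $n\mu(V)>2g-2$ when $2g-2<0$.
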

\begin{proof} Note that if a semi-stable vector bundle $W$ has $\mu(W)>\mu(\omega_{E/k})=2g-2$ then by Serre duality we have
\[\H^1(E,W)^{\vee}=\Hom_{\mathcal{O}_E}(W,\omega_{E/k})=0\]
Indeed, the image of such a nontrivial map would have slope smaller than that of $\mu(W)$, violating the semistability assumption. Thus, since $S^n(V)$ is semi-stable for every $n \geq 1$ by \cite[Theorem 0.2]{gieseker}, we only need to verify that $\mu(S^n(V))>2g-2$. But this follows because $\mu(S^n(V))=n\mu(V)>2g-2$ for every $n\geq 1$. \end{proof}

\begin{proposition} \label{examples} Let $k$ be an algebraically closed field and $n \geq 3$. There are $n$-dimensional non-schematic algebraic spaces $X$ which are proper over $k$ which are equipped with a proper birational morphism $f: Y \to X$ such that 
\begin{enumerate} 
\item $f$ is an isomorphism away from finitely many points, 
\item $Y$ quasi-projective over $k$, 
\item the exceptional locus of $f$ is $1$-dimensional, and
\item the cohomological Brauer group of $X$ is not finitely-generated.
\end{enumerate}
\end{proposition}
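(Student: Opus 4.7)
The plan is to imitate Nagata's surface example in higher dimensions using smooth hypersurfaces. Fix an elliptic curve $E$ over $k$ together with a finite collection of points $p_1,\dots,p_N \in E$ whose classes in $\Pic^{\circ}_{E/k}(k)$ are $\mathbf{Z}$-linearly independent (available by Nagata's argument when $k$ is uncountable, and for general algebraically closed $k$ by working with sufficiently large cyclic subgroups of torsion in place of the full Picard variety). Embed $E \hookrightarrow \mathbf{P}^{n+1}_k$ via a sufficiently positive complete linear system, and aim to produce a smooth hypersurface $Y \subset \mathbf{P}^{n+1}_k$ of dimension $n$ containing $E$ whose conormal bundle $N_{E/Y}^{\vee} = (I_E/I_E^2)|_E$ is ample on $E$. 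Ampleness implies the vanishing hypothesis stated at the start of the section, so Artin's contraction theorem then yields a proper birational morphism $\pi\colon Y \to X$ with $\pi_*\mathcal{O}_Y = \mathcal{O}_X$ contracting $E$ to a point $p$, furnishing properties (1), (2), (3).

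The most delicate step is producing such a $Y$. Since ampleness of a rank-$\geq 2$ bundle on a curve is not detected by any single numerical invariant, a generic-member argument in one linear system does not suffice, and one must use a degeneration. My approach is to exhibit a special (necessarily singular) $Y_0$ in the family of hypersurfaces through $E$ inside $|\mathcal{O}_{\mathbf{P}^{n+1}}(d) \otimes I_E|$ for $d \gg 0$ whose conormal sheaf along $E$ can be computed directly from the conormal sequence and is ample --- for instance with $Y_0$ a cone or a carefully chosen union of hypersurfaces meeting $E$ in a manner that forces $N_{E/Y_0}^{\vee}$ to decompose as a sum of positive-degree line bundles. Openness of smoothness in the family, combined with openness of ampleness for locally free sheaves in flat families over the proper base $E$, then supplies a smooth nearby fiber $Y$ for which $N_{E/Y}^{\vee}$ remains ample.

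Granted such $Y$, property (4) follows from Lemmas \ref{contractionsequence} and \ref{lem:thickpicard}: the cokernel of $\Pic(Y)\to\Pic(Y_p^{sh})$ injects into $\Br'(X)$, $\Pic(Y)$ is finitely generated since $Y$ is projective, and Lemma \ref{lem:thickpicard} shows that $\Pic(Y_p^{sh})\to\Pic(E)$ is surjective with torsion kernel in positive characteristic and an isomorphism in characteristic zero (applying Lemma \ref{lem:semi-stable} with $g=1$, noting that an ample semistable conormal bundle has slope $>0 = 2g-2$). Since $\Pic(E)_{\mathrm{tors}}\supset E(k)_{\mathrm{tors}}$ is not finitely generated, neither is $\Br'(X)$. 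Non-schematicity of $X$ follows exactly as in Nagata's example: $X$ inherits normality from $Y$ via $\pi_*\mathcal{O}_Y = \mathcal{O}_X$, so were $X$ a scheme, \cite[Tag 0BCV]{stacks} applied to any affine open neighborhood of $p$ would produce a divisor $D \subset X\setminus\{p\}$, and the strict transform of $D$ in $Y$ would be a divisor disjoint from $E$ whose closure inside $\mathbf{P}^{n+1}$ meets $E$ only along some of the chosen $p_i$, forcing a nontrivial linear relation among their classes in $\Pic^{\circ}(E)$ --- a contradiction. The principal obstacle is the degeneration step producing $Y$; all other assertions follow formally from the lemmas of this section together with the Nagata template.
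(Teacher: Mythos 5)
You have the right skeleton (a smooth hypersurface containing a curve with ample conormal bundle, Artin contraction, then Lemmas \ref{contractionsequence}, \ref{lem:thickpicard} and \ref{lem:semi-stable} for part (4)), but the two steps you would actually need are not supplied. First, the construction of $Y$, which you yourself flag as the principal obstacle: no degeneration to a singular cone or reducible special fiber is needed. The paper's mechanism is direct. Take $E$ to be a smooth complete intersection curve $V(g_1,\dots,g_{r-1})$ of genus $g\geq 1$ in $\mathbf{P}^n$. For a hypersurface $Y=V(\sum a_ig_i)$ of degree $c$ containing $E$, the normal bundle $N_{E/Y}$ is the kernel of the map $N_{E/\mathbf{P}^n}=\bigoplus\mathcal{O}(d_i)|_E\to\mathcal{O}(c)|_E$ determined by the $a_i$ (\cite[Corollary 6.16]{3264}). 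Since $E$ is a curve one can choose a surjection $\bigoplus\mathcal{O}(-d_i)|_E\to\mathcal{O}(m)|_E^{\oplus r-2}$ with $m>2g-2$, dualize, and lift the resulting map to $\mathbf{P}^n$ using that $E$ is a complete intersection; this forces $N_{E/Y}^{\vee}\cong\mathcal{O}(m)|_E^{\oplus r-2}$, which is ample and semistable of slope $>2g-2$. A perturbation of the $a_i$ then makes $Y$ smooth while preserving ampleness and semistability, both open conditions. So the "degeneration" you are missing is really an explicit construction plus openness within a single linear system of maps.

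Second, and more seriously, your non-schematicity argument does not apply to your own setup. Your $Y$ is a hypersurface in projective space, not a blow-up along the points $p_i$, so there are no exceptional divisors and no strict transforms: a divisor $D\subset Y$ disjoint from $E$ is already closed in the ambient projective space and disjoint from $E$, and yields no relation among the classes $\mathcal{O}(p_i-p)$. The Nagata mechanism is specific to the surface obtained by blowing up $\mathbf{P}^2$ at the $p_i$, where divisors are controlled by $\Pic(\mathbf{P}^2)$ together with the exceptional curves. In the hypersurface setting the correct argument is the Grothendieck--Lefschetz theorem: $\Pic(Y)\cong\mathbf{Z}$ is generated by the hyperplane class, so every nonzero effective divisor on $Y$ is ample and must meet the positive-dimensional subvariety $E$; hence the complement of a putative affine neighborhood of the contracted point, which is pure of codimension $1$ by \cite[Tag 0BCV]{stacks}, cannot avoid $E$. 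Relatedly, the choice of independent points $p_1,\dots,p_N$ and the restriction to elliptic $E$ are unnecessary: any curve of genus $g\geq 1$ over an algebraically closed field already has infinitely generated torsion in $\Pic(E)$, which is all that part (4) requires.
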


\begin{proof}

 Let $k$ be an algebraically closed field and consider a complete and connected nonsingular curve $E \subset \mathbf{P}^n_k$ of genus $g \geq 1$ in projective $n$-space ($n \geq 4$) which is a complete intersection cut out by polynomials $g_1,...,g_{r-1}$ of degrees $d_1,...,d_{r-1}$. We will show that there is a smooth hypersurface $Y$ containing $E$ with the property that the conormal bundle $N_{E/Y}^{\vee}$ is an ample vector bundle. In characteristic $0$, we will also show that $Y$ can be chosen so that if $I$ is the ideal sheaf of $E$ in $Y$ then $\H^1(E, S^n(I/I^2))=0$ for every $n$. Then, the maps
\[\Pic(Y^{sh}_p) \to \Pic(E_n) \to \Pic(E)\]
are isomorphisms for every $n$, thereby showing that the torsion subgroup of $\Pic(Y^{sh}_p)$ is infinitely-generated. As we have noted above, this is automatic in positive characteristic. In either case, we will use this to show that the resulting Artin contraction $Y \to X$ has an infinitely-generated cohomological Brauer group.

Choose $m>2g-2 \geq 0$ and observe that because $E$ is a curve, we may find a surjection 
\[\bigoplus_{i=1}^{r-1} \mathcal{O}(-d_i)|_E \to \mathcal{O}(m)|_E^{\oplus r-2} \to 0\]
Thus, by dualizing this surjection we obtain an exact sequence
\[0 \to \mathcal{O}(-m)|_E^{\oplus r-2} \to \bigoplus_{i=1}^{r-1} \mathcal{O}(d_i)|_E \to \mathcal{O}(c)|_E \to 0\]
where $c>0$. Observe that the natural restriction map
\[\Hom_{\mathcal{O}_{\mathbf{P}^n}}(\bigoplus_{i=1}^{r-1} \mathcal{O}(d_i), \mathcal{O}(c)) \to \Hom_{\mathcal{O}_E}(\bigoplus_{i=1}^{r-1} \mathcal{O}(d_i)|_E, \mathcal{O}(c)|_E)\]
is surjective because $E \subset \mathbf{P}^n$ is a complete intersection. Thus, we may choose $r-1$ homogenous polynomials $a_1,...,a_{r-1}$ of degrees $c-d_1,...,c-d_{r-1}$ in $k[x_0,...,x_n]$ such that the corresponding map in $\Hom_{\mathcal{O}_{\mathbf{P}^n}}(\bigoplus_{i=1}^{r-1} \mathcal{O}(d_i), \mathcal{O}(c))$ gives rise to the exact sequence of vector bundles on $E$ as above. Then by \cite[Corollary 6.16]{3264}, the polynomial $g=\Sigma_{i=1}^{r-1}a_ig_i$ defines a degree $c$ hypersurface $Y$ containing $E$ with the property that $N_{E/Y}^{\vee}=\mathcal{O}(m)|_E^{\oplus r-2}$. In particular, the conormal bundle is ample and semi-stable. 

The problem is that $Y$, as defined, may not be smooth. However, if we perturb the choice of $a_1,...,a_{r-1}$, or, equivalently, perturb the map in the affine space 
\[\Hom_{\mathcal{O}_{\mathbf{P}^n}}(\bigoplus_{i=1}^{r-1} \mathcal{O}(d_i), \mathcal{O}(c))\] 
then we can conclude that the new $Y$ is smooth with the vector bundle $N_{E/Y}^{\vee}$ ample and semi-stable. Indeed, ampleness is an open condition by \cite[Proposition 4.4]{HartshorneAmple} and smoothness of the general member of a linear series with a $1$-dimensional base locus is also nonempty and open by \cite[Proposition 5.6]{3264}. Finally, openness of semistability follows from \cite[Proposition 2.3.1]{lehn}. Note that even after the perturbation, $N_{E/Y}^{\vee}$ still has a slope larger than $\mu(\omega_{E/k})=2g-2$, and hence if $\text{char}(k)=0$ we have $\H^1(E,S^n(N_{E/Y}^{\vee}))=0$ for every $n\geq 1$ by Lemma \ref{lem:semi-stable}.

Next, we will show that the resulting Artin contraction produces a non-schematic algebraic space. Observe that the natural map
\[\mathbf{Z} \cong \Pic(\mathbf{P}^n_k) \to \Pic(Y)\]
is an isomorphism by the Grothendieck-Lefschetz theorem (see \cite[Corollary 3.7]{SGA2}). Thus every nontrivial effective divisor on $Y$ is ample. Moreover, there exists a separated algebraic space $X$ along with a proper and birational morphism $\pi: Y \to X$ which contracts $E$. We claim that $X$ is not a scheme. Suppose it were, then there would exist an affine neighborhood $\Spec A \subset X$ about the singular point. Since this neighborhood is dense the complement $X \backslash \Spec A$ has pure codimension $1$ by \cite[Tag 0BCV]{stacks}. It follows that there is a codimension $1$ closed subvariety $X\backslash \Spec A = D \subset Y$ which does not pass through $E$. Consider the line bundle
\[\mathcal{O}(D) \in \Pic(Y) \cong \mathbf{Z}\]
and note that it has a section which vanishes along a non-empty locus. It follows that $\mathcal{O}(D)$ must be ample on $Y$, but this contradicts the fact that it doesn't pass through $E$. Thus there is no affine neighborhood of the singular point in $X$, i.e. $X$ is a non-schematic algebraic space.
To see that the cohomological Brauer group of $X$ is not finitely generated, note that the Leray spectral sequence with $Y \to X$ and $\mathbf{G}_m$ yields an exact sequence
\[\mathbf{Z} \to \Pic(Y^{sh}_p) \to \H^2(X, \mathbf{G}_m)\]
Moreover, the torsion subgroup of $\Pic(Y^{sh}_p)$ is not finitely generated. Thus, we obtain examples in all dimensions of non-schematic algebraic spaces with infinitely generated cohomological Brauer group. \end{proof}

\subsection{Progress over finite fields in dimension 3}

We have shown that Grothendieck's question has an affirmative answer for reduced, noetherian, separated algebraic spaces away from a subset of codimension $\geq 3$. The next natural question is: does it hold for threefolds? If $X$ is a scheme, any point admits an affine neighborhood and hence Gabber's theorem tells us every cohomology class is Zariski-locally represented by an Azumaya algebra. Even for threefolds which are not schemes, it is not clear that such a result should hold \emph{near} a fixed closed point. Here we give an affirmative answer when $X$ is a $3$-dimensional separated algebraic space, finite-type over a finite field $k$. An essential ingredient is the following result of Artin: $2$-dimensional algebraic space surfaces which are normal and defined over the algebraic closure of a finite field are always quasiprojective.

\begin{Theorem} \label{dim3finite} Let $X$ be a $3$-dimensional separated algebraic space, finite-type over $\overline{\mathbf{F}}_p$. Then there is a Zariski open covering $\bigcup U_i=X$ with the property that $\Br(U_i)=\Br'(U_i)$. Moreover, for any finite set of points $x_1,...,x_n \in X$ we can find a open set $U \subset X$ containing the $x_i$ so that $\Br(U)=\Br'(U)$.
\end{Theorem}

\begin{proof} The first claim follows from the second, so we prove that instead. Fix a dense open affine neighborhood $\Spec A \subset X$ and consider the complement $X \backslash U=S$. This $S$ is a $2$-dimensional algebraic space which is finite-type and separated over $\overline{\mathbf{F}_p}$. Consider the normalization $S' \to S$ and note that because $S$ is excellent this morphism is finite. We may compactify $S'$ (see \cite[1.2.1]{conradnagata}) and then by applying \cite[4.6]{artinalgebraicspaces} it follows that $S'$ is a projective scheme. Then by \cite[2.3]{GrossRes} or \cite[Corollary 48]{finitekollar}, it follows that $S$ is a scheme with the Chevalley-Kleiman property. In particular, given any finite set of points $x_1,...,x_n \in X$, there is an open affine subscheme $\Spec B \subset S$ containing the $x_i$ which are not in $\Spec A$. Since $S \subset X$ has the subspace topology there is a open subset $W \subset X$ with the property that $W \times_X S=\Spec B$. 
To conclude, apply Corollary \ref{prop:stratifyBrauer} to $W \cup \Spec A$ to see that 
\[\Br(W \cup \Spec A) = \Br'(W \cup \Spec A)\]
as desired. \end{proof}

\section{Brauer classes on Contractions are Geometric}

Fix a base field $k$ and let $\bar{k}$ denote its algebraic closure.

\subsection{A general collection of sections is a frame away from a Cartier divisor}

The proof of the following lemma is explained within the proof of \cite[Theorem 3.1]{enoughAzumaya} in the $1$-dimensional setting. That argument essentially follows an idea of Maruyama (see \cite[Theorem 2.3]{Maruyama}).

\begin{lemma} \label{trivialelementary} Let $X$ denote a scheme of dimension $\leq 3$ which is proper over $\bar{k}$. Suppose $E$ is a globally generated rank $r$ vector bundle on $X$. Then, given any finite set of points $Z \subset X$, there is a Cartier divisor $i: D \to X$ not meeting $Z$, a line bundle $L$ on $D$ and an exact sequence
\[0 \to \mathcal{O}_X^{\oplus r} \to E \to i_*L \to 0\]
\end{lemma}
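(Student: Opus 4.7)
The plan is to produce the extension by choosing $r$ general global sections $s_1, \ldots, s_r$ of $E$ and analyzing the induced map $\phi : \mathcal{O}_X^{\oplus r} \to E$. I will argue that for a sufficiently general choice, $\phi$ is injective and its cokernel is of the form $i_*L$ for a line bundle $L$ on a Cartier divisor $D \subset X \setminus Z$. This is the natural extension to $\dim X \leq 3$ of the Maruyama/Schr\"oer argument used in the curve case.

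First, because $E$ is globally generated, the fiber evaluation $V := H^0(X, E) \to E \otimes k(x)$ is surjective at every point $x \in X$. Setting $W := Z \cup \mathrm{Ass}(X)$, a finite set since $X$ is noetherian, the condition that $s_1(w), \ldots, s_r(w)$ form a basis of $E \otimes k(w)$ at every $w \in W$ cuts out a non-empty Zariski-open $U_W \subset V^r$. A choice $(s_1,\ldots,s_r) \in U_W$ ensures simultaneously that (a) $\det\phi \in H^0(X, \det E)$ does not vanish at any associated point of $X$, so it is a non-zero-divisor section, whence $\phi$ is injective and its zero locus $D$ is a Cartier divisor, and (b) $\phi$ has full rank at every $z \in Z$, so $D \cap Z = \emptyset$.

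Next, to ensure that $\mathrm{coker}(\phi)$ is a line bundle on $D$ (rather than a sheaf of higher rank along some subvariety of $D$), I will bound the locus where $\phi$ drops rank by $\geq 2$ via a Kleiman-style dimension count. Introduce the incidence variety
\[W_k := \{(x, s_\bullet) \in X \times V^r : \mathrm{rank}\,\mathrm{ev}_x(s_\bullet) \leq r - k\}.\]
Global generation makes $V^r \to (E \otimes k(x))^{\oplus r}$ surjective at every $x$, so the fiber of $W_k \to X$ over each $x$ is the preimage of the codimension-$k^2$ universal determinantal subvariety in $\mathrm{Hom}(k^r, E \otimes k(x))$; consequently $W_k$ has codimension $k^2$ in $X \times V^r$. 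Projecting to $V^r$, the image of $W_2$ has dimension at most $\dim V^r + \dim X - 4 \leq \dim V^r - 1$ using $\dim X \leq 3$, so there is a dense open $U_{\mathrm{rank}} \subset V^r$ of tuples for which $\phi$ has rank $\geq r-1$ everywhere on $X$. Take $(s_1,\ldots,s_r) \in U_W \cap U_{\mathrm{rank}}$.

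With such a tuple, $\phi$ has rank exactly $r-1$ at every point of $D$. The $(r-1)$-st Fitting ideal of $\mathrm{coker}(\phi)$ is therefore the unit ideal in a neighborhood of $D$, so stalkwise Smith normal form identifies $\mathrm{coker}(\phi)$ with $\mathcal{O}_{X,p}/(f)$ for $f$ a local equation of $D$, exhibiting it as $i_*L$ for an invertible sheaf $L$ on $D$. The main obstacle — and the reason for the hypothesis $\dim X \leq 3$ — is the codimension count in the third step: the stratum where the rank drops by two has expected codimension $k^2 = 4$, so the argument breaks down as soon as $\dim X \geq 4$, where a generic tuple really does meet this stratum in a positive-dimensional locus and the cokernel ceases to be a line bundle on $D$.
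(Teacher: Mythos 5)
Your argument is correct and is essentially the paper's own proof: the paper packages the same general-position dimension count via Kleiman's transversality theorem applied to the degeneracy loci $G_j$ (integral of codimension $(r-j)^2$) in the Grassmannian of $r$-dimensional quotients of $V=\H^0(X,E)$, with your incidence variety $W_k$ replaced by a general $\mathbf{GL}(V)$-translate of a fixed $r$-dimensional subspace $V'\subset V$; in both versions the hypothesis $\dim X\le 3<4$ is used exactly to make the rank-drop-by-two locus pull back to the empty set, and avoidance of $Z\cup\mathrm{Ass}(X)$ yields a Cartier divisor missing $Z$ with injectivity of $\mathcal{O}_X^{\oplus r}\to E$. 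Two cosmetic quibbles only: the ideal of $(r-1)\times(r-1)$ minors is the \emph{first} Fitting ideal of the cokernel (not the $(r-1)$-st), and ``Smith normal form'' should be replaced by the observation that a unit $(r-1)\times(r-1)$ minor lets one split off an identity block, identifying the cokernel locally with $\mathcal{O}_{X,p}/(\det\phi)$, hence with an invertible sheaf on $D$.
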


\begin{proof} Consider the finite-dimensional vector space $V=\H^0(X,E)$ and the Grassmanian of $r$-dimensional quotients of $V$ over $\bar{k}$, we denote it by $\mathbf{Gr(V, r)}$. Note that the canonical surjection $V \otimes \mathcal{O}_X \to E$ defines a morphism $f: X \to \mathbf{Gr(V, r)}$ by pulling back the universal quotient $V \otimes \mathcal{O}_{\mathbf{Gr(V, r)}} \to E_U$. Fix some $r$-dimensional subvector space of $i: V' \subset V$ and consider the subscheme $G_{j} \subset \mathbf{Gr(V, r)}$ of $r$-dimensional quotient maps $\phi: V \to V''$ where the composition $\phi \circ i: V' \to V''$ has rank $\leq j$. The idea will be to translate $V'$ so that the map $f: X \to \mathbf{Gr(V, r)}$ intersects $G_j$ transversely and so that the image of the associated points of $X$ avoid $G_{r-1}$.

Note that $G_j$ is an integral subscheme of codimension $(r-j)^2$. To see this, first observe that the open subset of rank $r$ matrices in $\mathbf{F} \subset \mathbf{Hom}_k(V, k^{\oplus r}) \cong \mathbf{M}_{n\times r}(k)$ admits a natural map to $\mathbf{Gr}(V, r)$:
\[\phi: V \to k^{\oplus r} \to 0 \mapsto [\phi: V \to k^{\oplus r} \to 0]\]
Note that this gives $\mathbf{F}$ the structure of a principle $\mathbf{GL}_r$-bundle over $\mathbf{Gr(V,r)}$. Consider $V' \otimes \mathcal{O}_{\mathbf{F}}$ and the universal map
\[V' \otimes_k \mathcal{O}_{\mathbf{F}} \to V \otimes_k \mathcal{O}_{\mathbf{F}} \to \mathcal{O}_{\mathbf{F}}^{\oplus r}\]
Observe that $G_j \times_{\mathbf{Gr(V, r)}} \mathbf{F}$ is the locus where this composition has rank $\leq j$. It is integral of codimension $(r-j)^2$ by \cite[1.1, 2.10]{det} and therefore so is  $G_j$ since this property can be checked smooth locally.

It follows that $G_{r-1}$ is an integral Cartier divisor and $G_{r-2}$ has codimension $4$ in $\Gr(V, r)$. Note that $\mathbf{GL(V)}=G$ acts on $\mathbf{Gr(V, r)}$ transitively. Let $\bigcup_i X_i \to X$ be the disjoint union of the components of the normalization of $X$, and by precomposition, for each $i$ and $j$ we obtain morphisms $X_i \to X \to \mathbf{Gr(V, r)}$ and $G_j \to \mathbf{Gr(V,r)}$. By applying \cite[Theorem 2(1)]{translate} we see that there is a nonempty open set $U_{ij} \subset G$ so that for each rational point $s \in U_{ij}$ the fiber product with the translate $sG_j \times_{\mathbf{Gr(V,r)}} X_i$ has the expected dimension:
\[\dim(sG_j)+\dim(X_i)-\dim(\mathbf{Gr(V,r)})\]
Note that $sG_j$ is the locus of quotients $V \to k^{\oplus r}$ where the composition $s^{-1}V' \subset V \to k^{\oplus r}$ has rank $\leq j$. Thus, by choosing a rational point of the intersection $\bigcap_{i,j} U_{ij}$ we obtain a translate $s^{-1}V'$ so that the associated subscheme $G_{r-1}$ pulls back to some Cartier divisor of $X_i$ but $X_i \times_{\mathbf{Gr(V, r)}} G_{r-2}$ is empty for each $i$ (this is where the dimension of $X$ is used). By a similar argument, one can further show that there is a translate $s^{-1}V'$ with all the properties above so that $sG_{r-1}$ does not contain the image of any of the associated points of $X$ or any point from $Z$. Indeed, these points correspond to (finitely many) irreducible subschemes $C_k \subset X$ and so for each there is a open subset $U_k \subset G$ so that for every rational point $s \in U_k$ the translate $sG_{r-1}$ does not contain the image of $C_k$ in $\mathbf{Gr(V,r)}$. Taking a rational point from the intersection of all the $U_{ij}$ and $U_k$ we obtain a translate $s^{-1}V'$ where $D=sG_{r-1} \times_{\mathbf{Gr(V,r)}} X$ is a Cartier divisor containing no associated points of $X$ or any point from $Z$ and for which $sG_{r-2} \times_{\mathbf{Gr(V,r)}} X$ is empty. Thus, we replace $V'$ with $s^{-1}V'$.

Consider the following exact sequence
\[ 0 \to V' \otimes \mathcal{O}_{\mathbf{Gr(V, r)}} \to E_U \to L_{G_{r-1}} \to 0\] 
where the last term is the cokernel of the first map. Exactness on the left follows because the map is an isomorphism away from the Cartier divisor $G_{r-1} \subset \mathbf{Gr(V, r)}$. The cokernel is supported on the reduced subscheme $G_{r-1}$ and away from $G_{r-2}$ the cokernel has constant rank $1$ i.e. away from $G_{r-2}$ the last term is a line bundle.

Therefore by pulling back this exact sequence along $X \to \mathbf{Gr(V,r)}$ we obtain an exact sequence
\[0 \to \mathcal{O}_X^{\oplus r} \to E \to i_*L \to 0\]
for some line bundle $L$ on $D$. The only thing to verify is injectivity on the left but this follows because the obstruction to injectivity is a coherent sheaf supported on the Cartier divisor $G_{r-1} \times_{\mathbf{Gr(V,r)}} X=D \subset X$. By construction this divisor contains no associated points of $X$ and thus no sheaf supported on it admits a nontrivial map to $\mathcal{O}_X^{\oplus r}$.
\end{proof}

\subsection{Bertini-type results}

\begin{lemma} \label{E-ample} Let $C \subset Y$ be the inclusion of a proper $1$-dimensional closed subscheme in a quasiprojective scheme $Y$ over $\bar{k}$. Suppose that $E$ is rank $r$ vector bundle on $C$ whose determinant extends to $Y$, then there is an ample line bundle $L$ on $Y$ with the property that $E \otimes L|_C$ is globally generated and if $\mathrm{det}(E \otimes L|_C) =\mathcal{O}_C(D)$ for some effective Cartier divisor $D \subset C$ not containing any associated point of $Y$, then there is an ample Cartier divisor $H \subset Y$ with $H \cap C=D$ scheme-theoretically. \end{lemma}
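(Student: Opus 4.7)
The plan is to find $L$ as a sufficiently large tensor power of a fixed very ample line bundle on $Y$, and then, for the second assertion, to produce $H$ as the zero locus of a section of $\mathcal{L}' := N \otimes L^{\otimes r}$ (with $N$ extending $\det E$ to $Y$) obtained by lifting the canonical section of $\mathcal{O}_C(D)$. The main obstacle is that $Y$ is only quasiprojective, so Serre vanishing does not directly apply on $Y$; we get around this by passing to a projective closure and working with a coherent extension of $N$ there.

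Concretely, fix an embedding $Y \hookrightarrow \mathbf{P}^n_{\bar k}$ and let $M := \mathcal{O}_{\mathbf{P}^n}(1)|_Y$, so that $M$ is very ample on $Y$ and $M|_C$ is ample on $C$ (which is projective since it is proper and $1$-dimensional over $\bar k$). By Serre's theorem applied to the coherent sheaf $E$ on the projective scheme $C$, for $s \gg 0$ the sheaf $E \otimes M^{\otimes s}|_C$ is globally generated; take $L := M^{\otimes s}$, enlarging $s$ below as needed. Letting $N$ be a line bundle on $Y$ extending $\det E$, the line bundle $\mathcal{L}' := N \otimes L^{\otimes r}$ on $Y$ satisfies $\mathcal{L}'|_C \cong \mathcal{O}_C(D)$ whenever $\det(E \otimes L|_C) = \mathcal{O}_C(D)$, and by enlarging $s$ we may further assume $\mathcal{L}'$ is ample on $Y$.

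Next, pass to the scheme-theoretic closure $\bar Y$ of $Y$ in $\mathbf{P}^n$, inside which $C$ remains closed by properness. Extend $N$ to a coherent sheaf $\bar N$ on $\bar Y$ and set $\bar{\mathcal{L}'} := \bar N \otimes \mathcal{O}_{\bar Y}(sr)$: this is a coherent sheaf on the projective scheme $\bar Y$ extending $\mathcal{L}'$ with $\bar{\mathcal{L}'}|_C = \mathcal{O}_C(D)$. Applying Serre's vanishing theorem on $\bar Y$ and enlarging $s$ once more, arrange both $H^1(\bar Y, I_C \cdot \bar{\mathcal{L}'}) = 0$ and global generation of $I_C \cdot \bar{\mathcal{L}'}$, where $I_C$ now denotes the ideal sheaf of $C$ in $\bar Y$. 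The vanishing yields surjectivity of $H^0(\bar Y, \bar{\mathcal{L}'}) \twoheadrightarrow H^0(C, \mathcal{O}_C(D))$, so the canonical section $s_D$ of $\mathcal{O}_C(D)$ admits a lift $\bar s \in H^0(\bar Y, \bar{\mathcal{L}'})$.

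Finally, use a Bertini-type avoidance argument to ensure the lift's zero locus is Cartier on $Y$. The lifts of $s_D$ form an affine space modeled on $H^0(\bar Y, I_C \cdot \bar{\mathcal{L}'})$. For each of the finitely many associated points $p$ of $Y$ not lying in $C$, global generation of $I_C \cdot \bar{\mathcal{L}'}$ supplies a section nonvanishing at $p$, so the locus of lifts $\bar s$ with $\bar s(p) = 0$ is a proper affine subspace. A general lift therefore avoids every associated point of $Y$ outside of $C$, while $V(\bar s) \cap C = D$ avoids the associated points inside $C$ by hypothesis. Setting $s := \bar s|_Y$ and $H := V(s)$ yields an effective Cartier divisor on $Y$ with $\mathcal{O}_Y(H) = \mathcal{L}'$ (hence ample) and $H \cap C = D$ scheme-theoretically, as required.
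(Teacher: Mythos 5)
Your proposal is correct and follows essentially the same route as the paper: compactify, lift the canonical section of $\mathcal{O}_C(D)$ via Serre vanishing for $I_C$ twisted by a large power of the ample bundle, and then perturb the lift by sections vanishing on $C$ so that its zero locus misses the associated points of $Y$ (which, combined with the hypothesis on $D$, makes $H$ Cartier). The only cosmetic differences are that the paper uses flatification to extend $\det E$ as a line bundle on the compactification where you work with a coherent extension, and that it realizes the perturbation with explicit sections $s_i$ dual to the embedded points (via an avoidance lemma) where you use a genericity argument over the infinite field $\bar{k}$.
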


\begin{proof} We may embed $Y$ into a projective scheme $\bar{Y}$ over $\bar{k}$. We may also find a coherent extension, $G$, of the determinant of $E$ to $\bar{Y}$ which is a line bundle on $Y \subset \bar{Y}$. By flatification (see \cite[Tag 0815]{stacks}) there is a $Y$-admissible blow-up $\bar{Y}' \to \bar{Y}$ so that $G|_{Y}$ extends to a line bundle on $\bar{Y}'$. Thus, we may replace $\bar{Y}$ with $\bar{Y}'$ so that the determinant of $E$ on $C \subset \bar{Y}$ extends to a line bundle $L'$ on $\bar{Y}$. In particular, we may assume that $Y$ is projective.

Fix closed points $p_1,...,p_l \in Y \backslash C$, one from each embedded point of $Y$ that is not contained in $C$, and isomorphisms $f^r_{p_i}: L' \otimes L^{\otimes r} \otimes k(p_i) \simeq k(p_i)$. Using these isomorphisms we may uniquely describe the value of a section $s \in \H^0(Y, L' \otimes L^{\otimes r})$ at $p_i$ as an element of $k(p_i)$. Now, choose an ample line bundle $L$ on $Y$ with the following properties
\begin{enumerate}
\item $L' \otimes L^{\otimes r}$ is ample, 
\item $E \otimes L|_C$ globally generated, 
\item $\H^1(Y, L' \otimes L^{\otimes r} \otimes I_C)=0$, and
\item There exists $s_1,...,s_l \in \H^0(Y,L' \otimes L^{\otimes r})$ with $s_i(p_j)=\delta_{ij}$ and $s_i|_C=0$.
\end{enumerate} 
Indeed, these items can be arranged by taking $L$ to be sufficiently ample. The first three items are well-known, for the last apply the avoidance lemma as stated in \cite[Theorem 5.1]{gabber2015hypersurfaces}. After possibly replacing $L$ with $L^{\otimes N}$, the aformentioned result allows us to find sections $s_i' \in \H^0(Y,L' \otimes L^{\otimes r})$ which vanish on $C \cup \{p_1,..,p_{i-1},p_{i+1},..,p_l\}$ but is nonzero at $p_i$. After finding an $N$ so that such $s_1',..,s_l'$ exist in $\H^0(Y,L' \otimes L^{\otimes rN})$, we use isomorphisms $f^{rN}_{p_i}$ and set $s_i=\frac{1}{s_i'(p_i)}s_i'$ to obtain the desired $s_i$. Thus, by replacing $L$ with $L^{\otimes N}$ we may achieve the four properties above.

We claim that this $L$ satisfies the statement of the lemma. Indeed, if $\det(E \otimes L|_C)=(L' \otimes L^{\otimes r})|_C=\mathcal{O}_C(D)$ for some effective divisor $D \subset C$ not containing any associated point of $Y$, then taking cohomology of the exact sequence
\[0 \to L' \otimes L^{\otimes r} \otimes I_C \to L' \otimes L^{\otimes r} \to (L' \otimes L^{\otimes r})|_C \to 0\]
yields a surjection $\H^0(\bar{Y}, L' \otimes L^{\otimes r}) \to \H^0(C,(L' \otimes L^{\otimes r})|_C)$. Thus, we may consider the preimage of the section $s_D \in \H^0(C, \mathcal{O}_C(D))$ to obtain a section $s_{H'}$ whose vanishing $H'$ intersects $C$ precisely at $D$. 

However, $H'$ may not be Cartier since it could contain associated points of $Y$. On $C$, $H'$ contains no associated points because $H' \cap C=D$ and $D$ doesn't contain any  associated points of $Y$. However, the vanishing of the section
\[s=s_{H'}+\Sigma_{i=1}^l (1-s_{H'}(p_i))s_i \in \H^0(Y, L' \otimes L^{\otimes r})\]
does not contain any associated point of $Y$ and is therefore a Cartier divisor $H$ with $H \cap C=D$ scheme-theoretically.
\end{proof}

The next lemma is a twisted analog of the following fact: given two vector bundles whose ranks differ by a sufficiently large integer on a quasi-projective scheme $H$ and a surjection between them defined on a finite subscheme $D$, the surjection can be extended to a surjection defined over all of $H$. 

\begin{lemma} \label{Bertini} Let $H$ denote a quasi-projective scheme over $\bar{k}$ and suppose that $D \subset H$ is a finite subscheme. Fix a $\mu_n$-gerbe $\ms H \to H$, let $\ms D \to D$ denote its restriction to $D$, and let $\chi$ denote the twisted line bundle on $\ms D$ (it is unique upto non-unique isomorphism). Suppose that $W_1$ and $W_2$ are twisted vector bundles on $\ms H$ of rank $rl$ and $l$ respectively with $l(r-1)+1> \dim H$. Given a surjective map $\phi_D: W_1|_{\ms D} \to W_2|_{\ms D}$, then there is an ample line bundle $L \in \Pic(H)$, a $N>0$, fixed isomorphisms $W_2|_{\ms D} \otimes L^{\otimes N} \cong \chi^{\oplus l} \cong W_2|_{\ms D}$, and a surjection $\psi: W_1 \to W_2 \otimes L^{\otimes N}$ with the property that there is an isomorphism $g: \chi^{\oplus l} \to \chi^{\oplus l}$ making the following diagram commute

\centering
\begin{tikzcd}
W_1|_{\ms D} \arrow{d}{\mathrm{id}} \arrow{r}{\psi|_D} & W_2|_{\ms D} \otimes L^{\otimes N} \cong \chi^{\oplus l} \arrow{d}{g} \\
W_1|_{\ms D} \arrow{r}{\phi_D} & W_2|_{\ms D} \cong \chi^{\oplus l}  \\
\end{tikzcd} \end{lemma}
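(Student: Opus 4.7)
The plan is to convert the statement into a Bertini-type genericity argument for global sections of an untwisted sheaf on $H$. Observe first that $W_1^\vee \otimes W_2$ carries the trivial $\mu_n$-action, so it descends to a coherent sheaf $F$ on $H$. A map $W_1 \to W_2 \otimes L^{\otimes N}$ then corresponds precisely to a global section of $F \otimes L^{\otimes N}$, and its surjectivity at a point $x \in H$ is equivalent to the associated matrix in $\Hom(W_1(x), W_2(x) \otimes L^{\otimes N}(x))$ having full rank $l$. By the usual determinantal codimension formula (the same computation used for $G_j$ in the proof of Lemma \ref{trivialelementary}), the rank-deficient stratum in this $l \times rl$ matrix space has codimension exactly $l(r-1)+1$.

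To dispose of the boundary data, note that $D$ is finite over $\bar k$, hence the spectrum of an Artinian semilocal ring, so $L^{\otimes N}|_D \cong \mathcal{O}_D$ for every $N$; fixing such an isomorphism promotes $\phi_D$ to an element of $\H^0(D, F|_D)$. For $N$ sufficiently large (by Serre vanishing, after compactifying $H$ if necessary), the restriction map
\[ \H^0(H, F \otimes L^{\otimes N}) \longrightarrow \H^0(D, F|_D) \]
is surjective and $F \otimes L^{\otimes N} \otimes I_D$ is globally generated at every point of $H \setminus D$, where $I_D$ denotes the ideal sheaf of $D$. I would then lift $\phi_D$ to some $\psi_0 \in \H^0(H, F \otimes L^{\otimes N})$; any other lift has the form $\psi_0 + \eta$ with $\eta$ in the vector space $V := \H^0(H, F \otimes L^{\otimes N} \otimes I_D)$.

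Now I would run the dimension count. Let $Z \subset V \times (H \setminus D)$ be the incidence variety of pairs $(\eta, x)$ such that $(\psi_0 + \eta)(x)$ is not surjective. By global generation, the fiber of $Z$ over a fixed $x$ is a translate of the preimage of the rank-deficient locus under the surjective evaluation $V \twoheadrightarrow (F \otimes L^{\otimes N})(x)$, hence has codimension $l(r-1)+1$ in $V$. Consequently $\dim Z \le \dim V + \dim H - (l(r-1)+1) < \dim V$, using the hypothesis $l(r-1)+1 > \dim H$, so the projection $Z \to V$ is not dominant. A general $\eta$ therefore yields a $\psi := \psi_0 + \eta$ that is surjective on $H \setminus D$; on $\ms D$ it restricts to $\phi_D$, surjective by hypothesis, so $\psi$ is surjective everywhere. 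The automorphism $g$ in the diagram then accounts for the two prescribed identifications of $\chi^{\oplus l}$, and can be taken to be the identity if one arranges them compatibly via the fixed $L^{\otimes N}|_D \cong \mathcal{O}_D$.

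The main obstacle is the uniformity of the fiberwise codimension estimate: one has to verify that the rank-deficient stratum really pulls back under the surjective evaluation $V \twoheadrightarrow (F \otimes L^{\otimes N})(x)$ to a subscheme of the expected codimension $l(r-1)+1$, and that the total incidence $Z$ has the expected dimension, so that the projection onto $V$ really misses a dense open set. Once this determinantal bookkeeping is arranged, the $\mu_n$-gerbe structure plays no further role, and the numerical hypothesis $l(r-1)+1 > \dim H$ is exactly what makes the generic surjectivity argument go through.
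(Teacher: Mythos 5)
Your proposal is correct and follows essentially the same route as the paper's proof: descend $\mathcal{H}om(W_1,W_2)$ to an untwisted sheaf on $H$, compactify and twist by a large power of an ample line bundle to get surjective restriction to $D$ and global generation off $D$, and then run the incidence-variety dimension count against the rank-deficient determinantal locus of codimension $l(r-1)+1$, which beats $\dim H$ by hypothesis. The only difference is cosmetic: the paper lifts $\phi_D$ one Artinian component of $D$ at a time and so only recovers $\phi_D$ up to componentwise nonzero scalars (whence the automorphism $g$), whereas you lift $\phi_D$ globally and perturb within the ideal sheaf of $D$, which lets you take $g=\mathrm{id}$.
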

\begin{proof} Set $W=\mathcal{H}om(W_1,W_2)$, since it is an untwisted vector bundle on $\ms H$ we may consider it as a vector bundle of rank $rl^2$ on $H$. Since $H$ is a quasi-projective scheme we may embed $H \subset H'$ into a projective $k$-scheme $H'$. We may also find a coherent extension of $W$ to $H'$. Moreover $W$ can be made locally free after a further $H$-admissible blow up $\bar{H} \to H'$ by flatification (see \cite[Tag 0815]{stacks}). The result is a projective compactification $\bar{H}$ of $H$ where $W$ extends as a locally free module, by abuse of notation we refer to this vector bundle on $\bar{H}$ as $W$. Fix an ample line bundle $L$ on $\bar{H}$.

Since $D$ is a finite subscheme we may write $D=\bigcup_{s \in D} \Spec A_s$ where the latter is a disjoint union of Artinian local rings. Choose identifications $L|_D \cong \mathcal{O}_D$ and $W_2|_{\ms D} \cong \chi^{\oplus l}$ once and for all and note that, for each $N>0$, a fixed choice of such isomorphisms induces a canonical isomorphism 
\[W_2|_{\ms D} \otimes L|_D^{\otimes N} \cong \chi^{\oplus l}\]
After fixing these identifications there is a unique way of identifying maps $t: W_1|_{\ms D} \to W_2|_{\ms D} \otimes L^{\otimes N}$ and $s:W_1|_{\ms D} \to W_2|_{\ms D}$ with maps in $\Hom(W_1|_{\ms D}, \chi^{\oplus l})$. Since we have made a fixed choice, we will continue to make these fixed identifications without further mention.

We will show that for some $N>0$ there is a section $\psi \in W \otimes L^{\otimes N}$ so that $\psi|_H:W_1 \to W_2 \otimes L^{\otimes N}$ is surjective. Moreover, this $\psi$ will have the property that for each $s \in D$ there is a $c_s \in k^*$ where $(\phi_D)|_{\Spec A_s}=c_s\psi|_{\Spec A_s}$. Then, multiplication by $(c_s)_{s \in D}=g \in \Pi_{s \in D} A_s$ yields a commutative diagram

\begin{center}
\begin{tikzcd}
W_1|_{\ms D} \arrow{d}{\text{id}} \arrow{r}{\psi|_D} & W_2|_{\ms D} \otimes L^{\otimes N} \cong \chi^{\oplus l} \arrow{d}{g} \\
W_1|_{\ms D} \arrow{r}{\phi_D} & W_2|_{\ms D} \cong \chi^{\oplus l}  \\
\end{tikzcd}
\end{center}
Consider the sheaf $W \otimes L^{\otimes N}$ and the associated exact sequence
\[0 \to W \otimes L^{\otimes N} \otimes I_D \to W \otimes L^{\otimes N} \to  (W \otimes L^{\otimes N})|_D \to 0\]
If $N$ is large enough we may assume that $W \otimes L^{\otimes N} \otimes I_D$ is globally generated. It follows that given any point $p \in H$ which is not in $D$, there are $rl^2$ global sections of $W \otimes L^{\otimes N}$ which vanish along $D$ but form a basis for $W \otimes L^{\otimes N} \otimes k(p)$. Moreover, for each $s \in D$ we have an exact sequence
\[0 \to (W \otimes L^{\otimes N} \otimes I_{D\backslash \{s\}}) \otimes I_{\Spec A_s} \to W \otimes L^{\otimes N} \otimes I_{D\backslash \{s\}} \to (W \otimes L^{\otimes N})|_{\Spec A_s}\cong W|_{\Spec A_s} \to 0\]
given by tensoring the exact sequence defining $\Spec A_s \subset \bar{H}$ with $W \otimes L^{\otimes N} \otimes I_{D\backslash \{s\}}$. Exactness on the right follows because $I_{D \backslash \{s\}}$ is isomorphic to $\mathcal{O}_{\bar{H}}$ near the closed subscheme $\Spec A_s$. Now, if $N$ is large enough, this yields an exact sequence of $k$-vector spaces upon taking global sections. Thus, if we take $N$ large enough, for every $s \in D$ there is a global section 
\[\psi_s \in \H^0(\bar{H}, W \otimes L^{\otimes N} \otimes I_{D\backslash \{s\}})\]
lifting $(\phi_D)|_{\Spec A_s}$ but which vanishes at every other $s' \in D$. Moreover we may also take a $k$-basis $\psi_1,...,\psi_n$ for the $k$-vector space $\H^0(\bar{H},W \otimes L^{\otimes N} \otimes I_D)$. Viewing all these sections inside $\H^0(\bar{H}, W \otimes L^{\otimes N})$ we set $\mathbf{A}^M_k=\Spec k[x_1,...,x_n,x_s;s \in D]$ and observe that there is a universal section
\[\psi=\Sigma_{i=1}^n x_i\psi_i+\Sigma_{s \in D} x_s\psi_s\]
of $W \otimes L^{\otimes N}$ pulled back to $\mathbf{A}^M_k \times_k \bar{H}$. Thus, for each rational point $a=(a_1,..,a_n,a_s;s \in D) \in \mathbf{A}^M_k$ we obtain the section 
\[\psi_a=\Sigma_{i=1}^n a_i\psi_i+\Sigma_{s \in D} a_s\psi_s \in \H^0(\bar{H},W \otimes L^{\otimes N})\]
and over the open set $H$ the restriction of this section $\psi_a|_H$ corresponds to a morphism in $\Hom(W_1,W_2 \otimes L^{\otimes N})$.

Over the complement $U=H \backslash D \subset H$ we consider the closed locus of non-surjective maps
\[Z=\{(a,u)| \psi_a \text{ is not surjective}\} \subset \mathbf{A}^M_k \times U\]
For any $u \in U$ the fiber $Z_u$ has codimension $rl-l+1$ since the $\psi_i$ generate $W \otimes L^{\otimes N}$ at all $u \in U$. Indeed, there is a surjective linear map
\[\pi: \mathbf{A}^M_{k(u)} \to \Hom_{k(u)}(k(u)^{rl},k(u)^{l})=(W \otimes L^{\otimes N}) \otimes k(u)\]
and the (irreducible) determinantal variety $B$ consisting of rank $\leq l-1$ linear maps has codimension $rl-l+1$ (see, for instance, \cite[1.1, 2.10]{det}) and therefore the preimage $\pi^{-1}(B)=Z_u$ has the same codimension. Thus, the dimension of $Z$, and hence the closure of its image $p_1(\bar{Z})$ in $\mathbf{A}^M_k$, is at most 
\[M-(rl-l+1)+\dim(H)<M\]
Moreover, consider $Z_s=V(x_s) \subset \mathbf{A}^M_k$ and note that this is a divisor. Thus there must necessarily be a $k$-point $c \in \mathbf{A}^M_k$ avoiding $\bar{Z} \cup \bigcup_{s \in D} Z_s$, we claim that the corresponding section of $W \otimes L^{\otimes N}$ works as desired. Indeed, such a point corresponds to a section
\[\psi_c=\Sigma_{i=1}^n c_i\psi_i+\Sigma_{s \in D} c_s\psi_s \in \H^0(\bar{H},W \otimes L^{\otimes N})\]
which is a surjective linear map for every $u \in U$ because $c$ is not in $\bar{Z}$. Moreover, $c_s \in k^*$ for each $s \in D$ because $c$ is not in $V(x_s) \subset \mathbf{A}^M_k$. Finally, for each $s \in D$ we have
\[\psi_c|_{\Spec A_s}=c_s\psi_s|_{\Spec A_s}=c_s\phi_D|_{\Spec A_s}\]
since $\psi_{s'} \in \H^0(\bar{H},W \otimes L^{\otimes N} \otimes I_{D\backslash \{s'\}})$ for all $s' \in D\backslash \{s\}$ and $\psi_i \in \H^0(\bar{H},W \otimes L^{\otimes N} \otimes I_D)$ for all $1 \leq i \leq n$.
\end{proof}

\subsection{Descending Twisted Vector Bundles}

Let $f: Y \to X$ be a proper birational morphism of algebraic spaces which are both finite-type over an algebraically closed field $k$ with $f_*\mathcal{O}_Y=\mathcal{O}_X$. Suppose that $f$ is an isomorphism away from finitely many points $Z=\{p_1,..,p_j\} \subset X$. From here onwards, the following notation is in force
\begin{enumerate}
\item $U=X\backslash Z$ is the locus where $f$ is an isomorphism.
\item $Z^{sh}$ is the disjoint union of the strictly henselian local rings of each $p_i$ in $X$ and $E^{sh}=Z^{sh} \times_X Y$.
\item $\widehat{Z}$ is the disjoint union of the completions of each of the strictly local rings of each $p_i$ in $X$ and $\widehat{E}=\widehat{Z} \times_X Y$.
\end{enumerate}
In summary, we have a commutative diagram with cartesian squares:
\[\begin{tikzcd}
E \arrow{r} \arrow{d}  & \widehat{E} \arrow{r} \arrow{d} & E^{sh} \arrow{r} \arrow{d} & Y \arrow{d}{f} & U \arrow{l} \arrow{d}{\text{id}} \\
Z  \arrow{r}  & \widehat{Z}=\bigsqcup_{i=1}^j \spec \widehat{\mathcal{O}_{X,p_i}^{sh}} \arrow{r} & Z^{sh}=\bigsqcup_{i=1}^j \spec (\mathcal{O}_{X,p_i}^{sh}) \arrow {r} & X & U \arrow{l}\\
\end{tikzcd}
\]

Our goal is to descend an Azumaya algebra from $Y$ to $X$ and because $f_*\mathcal{O}_Y=\mathcal{O}_X$ to descend a vector bundle it suffices to show that the bundle is trivial on $\widehat{E}$. In fact, in \cite{PerlingToric} Schr\"oer-Perling showed that the triviality of a vector bundle on $\widehat{E}$ can be checked on a suitable $\emph{infinitesimal}$ neighborhood of $E$. In turn, this will allow us to give a sufficient criterion for descending \emph{twisted} vector bundles from $Y$ to $X$. Moreover, this will be enough to descend the corresponding Azumaya algebra since any Azumaya algebra is of the form $\End(V)$ for a twisted vector bundle $V$. To fix the cohomology class of the twisted vector bundle we fix a $\mu_n$-gerbe $\ms X$ over $X$, and if $t: T \to X$ is a morphism then we use the notation $\ms X_T=\ms X \times_X T$ to denote the pullback of $\ms X$ along $t$. Observe that because $\widehat{Z}$ is the disjoint union of strictly local rings $\ms X_{\widehat{Z}}$ admits a twisted line bundle $\chi$ which is unique upto (non-unique) isomorphism. 

\begin{lemma} \label{twisteddescent} Let $f: Y \to X$, $g: \ms X_Y \to \ms X$ and $\chi \in \Pic(\ms X_{\widehat{Z}})$ be as above. If a twisted vector bundle $V$ on $\ms X_Y$ becomes isomorphic to $\chi^{\oplus k}|_{\ms X_{\widehat{E}}}$ when restricted to $\ms X_{\widehat{E}}$ then $g_*V$ is a twisted vector bundle on $\ms X$. 
\end{lemma}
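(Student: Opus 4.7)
The plan is to verify that $g_*V$ is a coherent twisted sheaf on $\ms X$ that is locally free of rank $k$. Coherence and the twisted property are formal: $g$ is the base change of the proper morphism $f$ along the faithfully flat gerbe map $\ms X \to X$, hence proper itself, so $g_*$ preserves coherence; and $g$ is equivariant for the inertia $\mu_n$ of the two gerbes, so $g_*$ preserves the $\mu_n$-weight decomposition of quasi-coherent sheaves, in particular sending twisted sheaves to twisted sheaves. The real content is local freeness of rank $k$, which I would check fpqc-locally via the flat cover of $X$ given by $\{U \hookrightarrow X,\ \widehat Z \to X\}$ (pulled back to $\ms X$).

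Over $\ms X_U$ the morphism $g$ restricts to an isomorphism, so $g_*V|_{\ms X_U} \cong V|_{\ms X_U}$ is already a twisted vector bundle. For the restriction to $\ms X_{\widehat Z}$, observe that $\widehat Z \to X$ is flat (strict henselization is ind-\'etale and completion at a Noetherian local ring is flat), so flat base change yields
\[
 g_*V|_{\ms X_{\widehat Z}} \;\cong\; \widehat g_*\bigl(V|_{\ms X_{\widehat E}}\bigr),
\]
where $\widehat g : \ms X_{\widehat E} \to \ms X_{\widehat Z}$ is the base change of $\widehat f$. Using the hypothesis $V|_{\ms X_{\widehat E}} \cong \chi^{\oplus k}|_{\ms X_{\widehat E}}$ and noting that $\chi^{\oplus k}|_{\ms X_{\widehat E}} = \widehat g^*(\chi^{\oplus k})$, the projection formula reduces the right-hand side to $\chi^{\oplus k} \otimes \widehat g_*\mathcal{O}_{\ms X_{\widehat E}}$. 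The Stein assumption $f_*\mathcal{O}_Y = \mathcal{O}_X$, combined with flat base change along $\widehat Z \to X$, gives $\widehat f_*\mathcal{O}_{\widehat E} = \mathcal{O}_{\widehat Z}$, and pulling this identity up along the flat gerbe map $\ms X_{\widehat Z} \to \widehat Z$ yields $\widehat g_*\mathcal{O}_{\ms X_{\widehat E}} = \mathcal{O}_{\ms X_{\widehat Z}}$. Hence $g_*V|_{\ms X_{\widehat Z}} \cong \chi^{\oplus k}$, a twisted vector bundle of rank $k$.

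With both restrictions identified as twisted vector bundles of rank $k$, local freeness of $g_*V$ on all of $\ms X$ follows by fpqc descent of the locally-free-of-rank-$k$ property, since $\ms X_U \sqcup \ms X_{\widehat Z} \to \ms X$ is jointly surjective and flat. The main technical point I anticipate is ensuring that the chain of identifications (flat base change, projection formula, and the Stein condition) transfers correctly across the gerbe map: each step is standard for the underlying algebraic spaces, but one must verify that every pullback involved is flat so that base change and the projection formula apply without obstruction on $\ms X$.
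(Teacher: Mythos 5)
Your proposal is correct and follows essentially the same route as the paper: reduce to the flat cover $\{U, \widehat{Z}\}$ of $X$, use flat base change and the Stein condition $f_*\mathcal{O}_Y=\mathcal{O}_X$ to get $\widehat{g}_*\mathcal{O}_{\ms X_{\widehat{E}}}=\mathcal{O}_{\ms X_{\widehat{Z}}}$, and conclude via the projection formula that $g_*V|_{\ms X_{\widehat{Z}}}\cong\chi^{\oplus k}$. The paper's proof is just a terser version of the same argument, and your extra remarks on coherence and preservation of the twisted structure are harmless additions.
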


\begin{proof} Since $f$ is an isomorphism away from $Z$ it suffices to analyze $g_*V$ over $\widehat{Z}$. Note that by flat base change we have $g_*\mathcal{O}_{\ms X_{\widehat{E}}}=\mathcal{O}_{\ms X_{\widehat{Z}}}$ so we may assume $\widehat{Z}=X$. Now use the projection formula $g_*V=g_*g^*\chi^{\oplus k}=g_*\mathcal{O}_{\ms X_{\widehat{E}}} \otimes \chi^{\oplus k}=\chi^{\oplus k}$. The result follows.\end{proof}

\begin{lemma} \label{infinitesimal} Let $f: Y \to X$, $\widehat{E} \to \widehat{Z}$, $\chi \in \Pic(\ms X_{\widehat{Z}})$ and $g: \ms X_Y \to \ms X$ be as above. Then there is a infinitesimal thickening of $E_{red} \subset E_r$ satisfying the property that if a vector bundle $V$ on $Y$ is trivial on $E_r$ then it is trivial on $\widehat{E}$ and therefore $f_*V$ is a vector bundle on $X$. Moreover, if a twisted vector bundle $V$ on $\ms X_Y$ becomes isomorphic to $\chi^{\oplus k}|_{\ms X_{E_r}}$ upon restriction to $\ms X_{E_r}$ then they also become isomorphic on $\ms X_{\widehat{E}}$ and therefore $g_*V$ is a twisted vector bundle on $\ms X$. If $\dim E \leq 1$ we may also choose an $E_r$ with no embedded associated primes. 
\end{lemma}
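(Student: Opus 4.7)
The plan is to reduce the twisted case to the untwisted one by tensoring with $\chi^\vee$ and to invoke the trivialization criterion of Perling--Schr\"oer \cite{PerlingToric} for the untwisted case; the no-embedded-primes claim will follow from Brodmann-type stability of associated primes.

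Via flat base change, it suffices to replace $X$ by $\widehat Z$ and $Y$ by $\widehat E$, since triviality on $\widehat E$ is intrinsic to the formal completion and $f$ is already an isomorphism over $U$. In this setup the untwisted statement reads: for the given vector bundle $V$ on $\widehat E$, there exists $r$ such that $V|_{E_r}\cong \mathcal O_{E_r}^{\oplus k}$ forces $V|_{\widehat E}\cong \mathcal O_{\widehat E}^{\oplus k}$. This is the content of \cite{PerlingToric}: by Grothendieck's existence theorem applied to the proper morphism $\widehat E\to \widehat Z$, $M:=H^0(\widehat E,V^\vee)=\varprojlim_n M_n$ with $M_n:=H^0(E_n,V^\vee|_{E_n})$ a finitely generated $A_n=\widehat A/I^{n+1}$-module. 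Mittag--Leffler for this inverse system of finitely generated modules over Noetherian rings produces an $r$ with $\operatorname{Im}(M\to M_r)=\operatorname{Im}(M_n\to M_r)$ for all $n\ge r$; after enlarging $r$, a trivialization on $E_r$ lifts to $k$ sections of $V^\vee$ on $\widehat E$ whose restriction to $E_{\mathrm{red}}$ forms a basis. Since $|\widehat E|=|E_{\mathrm{red}}|$, Nakayama then shows these sections freely generate $V^\vee$ on all of $\widehat E$. The conclusion that $f_*V$ is a vector bundle on $X$ then follows from (the untwisted analog of) Lemma \ref{twisteddescent}.

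For the twisted statement, $\ms X_{\widehat Z}$ is neutral because $\widehat Z$ is a disjoint union of spectra of complete strictly local Noetherian rings, so $\chi$ is globally defined on $\ms X_{\widehat Z}$ and restricts to $\ms X_{\widehat E}$. The tensor product $\underline W:=V|_{\ms X_{\widehat E}}\otimes \chi^{\vee}|_{\ms X_{\widehat E}}$ has trivial $\mu_n$-inertia weight and hence descends to an untwisted vector bundle on $\widehat E$. The hypothesis $V|_{\ms X_{E_r}}\cong \chi^{\oplus k}|_{\ms X_{E_r}}$ becomes $\underline W|_{E_r}\cong \mathcal O_{E_r}^{\oplus k}$, and the untwisted case yields $\underline W\cong \mathcal O_{\widehat E}^{\oplus k}$, equivalently $V|_{\ms X_{\widehat E}}\cong \chi^{\oplus k}|_{\ms X_{\widehat E}}$. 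Lemma \ref{twisteddescent} then furnishes the twisted vector bundle structure on $g_*V$.

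Finally, when $\dim E\le 1$, Brodmann's theorem ensures that $\operatorname{Ass}(\mathcal O_{\widehat E}/I^{n+1})$ stabilizes for $n\gg 0$. The embedded primes in this stable set correspond to finitely many isolated closed points of $\widehat E$, so replacing $E_r$ by its unmixed part (the intersection of its primary components at the minimal primes, a closed subscheme sandwiched between $E_{\mathrm{red}}$ and $E_r$) yields a subscheme with the same reduced structure, no embedded associated primes, and on which a trivialization still forces a trivialization on $\widehat E$. The main obstacle is the Mittag--Leffler/Nakayama step from \cite{PerlingToric}: passing from mod-$I^{r+1}$ triviality all the way to triviality on the completion requires control over the stable image of the inverse system, which is precisely the geometric input we borrow from their work.
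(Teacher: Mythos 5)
Your proposal follows the paper's proof essentially verbatim: cite Perling--Schr\"oer (\cite{PerlingToric}, Corollary 2.3) for the untwisted statement, reduce the twisted case by tensoring with $\chi^{\vee}$ (which kills the inertia weight and so descends to $\widehat{E}$), and pass to the unmixed/$S_1$-ization of $E_r$ --- supported away from only finitely many points when $\dim E \leq 1$ --- for the embedded-primes claim. The one caveat is your Mittag--Leffler gloss on the cited input: applied to $M=H^0(\widehat{E},V^{\vee})$ it yields an $r$ depending on the particular bundle $V$, whereas the lemma (and its use in Theorem \ref{T2}, where $E_r$ is fixed \emph{before} the bundle is constructed) requires an $r$ that works uniformly for all vector bundles; that uniformity is precisely the nontrivial content of \cite{PerlingToric}, so the citation rather than your sketch must carry that step.
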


\begin{proof} By \cite[Corollary 2.3]{PerlingToric} there is an infinitesimal neighborhood $E_r$ of the exceptional locus of $f$ so that if a vector bundle $E$ on $Y$ is trivial on $E_r$ then it is trivial on $\widehat{E}$ and therefore descends to a vector bundle on $X$. Note that even though their result stated for schemes the same proof works when $X$ is an algebraic space (see \cite[Remark 3.6]{PerlingToric}). Now suppose that a twisted vector bundle $V$ on $\ms X_Y$ becomes isomorphic to $\chi^{\oplus k}|_{\ms X_{E_r}}$ upon restriction to $\ms X_{E_r}$. Then the untwisted vector bundle 
\[V|_{\ms X_{\widehat{E}}} \otimes (\chi|_{\ms X_{\widehat{E}}})^{\vee}\]
on $\widehat{E}$ becomes trivial on $E_r$ and therefore it is trivial on $\widehat{E}$. Thus, we obtain the isomorphism $V|_{\ms X_{\widehat{E}}} \cong \chi|_{\ms X_{\widehat{E}}}^{\oplus k}$ and by Lemma \ref{twisteddescent} we obtain the desired result. Lastly, consider the closed subscheme $E_r' \subset E_r$ realizing the $S_1$-ization of $E_r$. If $\dim E_r \leq 1$ it is defined by an ideal supported at finitely many points of $E_r$ (see \cite[Tag 00RG]{stacks}) and therefore $V|_{E_r'}$ is trivial iff $V|_{E_r}$ is trivial (see \cite[Remark 1.4]{PerlingToric}) so we may replace $E_r$ with $E_r'$. Since $E_r$ is $S_1$, it cannot contain embedded associated points and we are done. \end{proof}

\begin{lemma} \label{closure} Let $X$ be an algebraic space over $k$, if $\mathrm{Br}(X \times_k \bar{k})=\mathrm{Br}'(X \times_k \bar{k})$ then $\mathrm{Br}(X)=\mathrm{Br}'(X)$.
\end{lemma}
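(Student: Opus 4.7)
The plan is to use Galois descent to propagate twisted vector bundles along the base change $\mathrm{Spec}\,\bar{k}\to\mathrm{Spec}\,k$. Fix a cohomological Brauer class $\alpha\in \mathrm{Br}'(X)$ of some order $n$, and use the Kummer sequence to lift it to a $\mu_n$-gerbe $\mathscr{Y}\to X$. By Proposition \ref{thm:brequiv} the task reduces to producing a nonzero twisted vector bundle on $\mathscr{Y}$, given (again by Proposition \ref{thm:brequiv}) that $\mathscr{Y}_{\bar{k}} := \mathscr{Y}\times_k\bar{k}$ admits one.

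First I would spread out: writing $\bar{k} = \varinjlim_{k'/k} k'$ as the filtered colimit of its finite separable subextensions, so that $\mathscr{Y}_{\bar k}=\varprojlim \mathscr{Y}_{k'}$ as algebraic stacks with affine transition maps, a standard limit argument for finitely presented quasi-coherent modules (cf.\ \cite[Tag 07SK]{stacks}) descends the given twisted vector bundle on $\mathscr{Y}_{\bar k}$ to a twisted vector bundle $V'$ on $\mathscr{Y}_{k'}$ for some finite separable extension $k'/k$. Twistedness is preserved under such descent since it is a pointwise condition on the $\mu_n$-action that already holds on the limit.

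Next I would push forward along the projection $\pi:\mathscr{Y}_{k'}\to\mathscr{Y}$ obtained by base change from $\mathrm{Spec}\,k'\to\mathrm{Spec}\,k$. This map is finite, flat, and representable, so $\pi_*V'$ is a coherent locally free $\mathcal{O}_{\mathscr{Y}}$-module of rank $[k':k]\,\mathrm{rk}(V')$; checking this on a smooth atlas of $\mathscr{Y}$ reduces it to ordinary Weil restriction of scalars. The $\mu_n$-gerbe structure on $\mathscr{Y}_{k'}$ is the pullback of that on $\mathscr{Y}$, so $\pi$ is banded by the identity on $\mu_n$, and consequently the inertial action of $\mu_n$ on $\pi_*V'$ agrees with the scalar $\mu_n$-action (the two actions already agreed on $V'$). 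Hence $\pi_*V'$ is a nonzero twisted vector bundle on $\mathscr{Y}$, and Proposition \ref{thm:brequiv} gives $\alpha\in\mathrm{Br}(X)$.

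The main obstacle I expect is making the descent step uniformly precise for an arbitrary algebraic space $X$: without some quasi-compactness hypothesis on $X$ one cannot spread a twisted vector bundle out of $\bar k$ in one step and must instead pass to a quasi-compact \'etale atlas and argue patch by patch, keeping track of which finite extension $k'$ suffices on each patch. For the applications pursued in the paper (where $X$ is of finite type over $k$) this is routine, and in fact one can take $k'/k$ finite Galois without loss, which simplifies the bookkeeping.
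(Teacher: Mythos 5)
Your proof is correct and follows essentially the same route as the paper: spread the twisted vector bundle out from $\bar{k}$ to a finite subextension $k'/k$ by a limit argument, then push forward along the finite flat projection $\mathscr{Y}_{k'}\to\mathscr{Y}$, which preserves twistedness since the gerbe on $\mathscr{Y}_{k'}$ is pulled back from $\mathscr{Y}$. The one small correction: since $\bar{k}$ is the full algebraic closure, write it as the colimit of \emph{all} finite subextensions rather than only the separable ones (the pushforward step works equally well for inseparable $k'/k$, as $\operatorname{Spec} k'\to\operatorname{Spec} k$ is still finite and flat).
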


\begin{proof} We may write $\bar{k}=\colim k'$ for all finite extensions $k'$ over $k$. Now suppose $\ms X$ is a torsion $\mathbf{G}_m$-gerbe over $X$ and by hypothesis $\ms X \times_k k'=\lim_{k'/k} \ms X \times_k k'$ admits a twisted vector bundle. This implies there is a finite extension $k'/k$ so that $\ms X \times_k k'$ admits a nonzero twisted vector bundle. Since $\ms X \times_k k' \to \ms X$ is a finite flat morphism, the pushforward of this nonzero twisted vector bundle is one as well. As such, the associated cohomological Brauer class of $\ms X$ is geometric. \end{proof}

\begin{lemma} \label{finitedescent} Suppose $X' \to X$ is a finite birational morphism of algebraic spaces of finite-type over $k$ which is an isomorphism away from finitely many points in X. If $\mathrm{Br}(X')=\mathrm{Br}'(X')$ then $\mathrm{Br}(X)=\mathrm{Br}'(X)$.
\end{lemma}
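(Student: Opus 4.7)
The plan is to fix $\alpha \in \mathrm{Br}'(X)$ represented by a $\mu_n$-gerbe $\mathscr{X} \to X$; by hypothesis the pullback $\mathscr{X}' := \mathscr{X} \times_X X'$ has $[\mathscr{X}'] \in \mathrm{Br}(X')$, so Proposition~\ref{thm:brequiv} yields a nonzero twisted vector bundle $V$ on $\mathscr{X}'$. The idea is to descend (a suitable modification of) $V$ via a Ferrand-style conductor pushout, rather than through Lemma~\ref{twisteddescent}, whose hypothesis $f_*\mathcal{O}_{X'} = \mathcal{O}_X$ is not available here (e.g.\ when $X' \to X$ is a partial normalization).

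To build the pushout, let $\mathcal{C} \subset \mathcal{O}_X$ be the conductor ideal of $f \colon X' \to X$. Because $f$ is finite, birational, and an isomorphism away from finitely many points, the coherent sheaf $f_*\mathcal{O}_{X'}/\mathcal{O}_X$ is supported at those finitely many points, so $Z' := V(\mathcal{C}) \subset X$ and $Z'_{X'} := Z' \times_X X' \subset X'$ are both Artinian. Ferrand's pinching theorem (in its algebraic-space formulation) then asserts that the square
\[\begin{tikzcd} Z'_{X'} \arrow{r} \arrow{d} & X' \arrow{d}{f} \\ Z' \arrow{r} & X \end{tikzcd}\]
is simultaneously cartesian and cocartesian, and since $f_*\mathcal{O}_{X'} \twoheadrightarrow \mathcal{O}_{Z'_{X'}}$ is surjective we are in the Milnor-patching regime: locally free $\mathcal{O}_X$-modules correspond to compatible pairs of locally free modules on $X'$ and $Z'$ glued along $Z'_{X'}$. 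Base-changing by the flat morphism $\mathscr{X} \to X$ produces an analogous pinch square of gerbes, and the same descent and Milnor patching transport to twisted quasi-coherent and locally free sheaves.

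It therefore suffices to find a twisted vector bundle $W_0$ on $\mathscr{X}_{Z'}$ together with an isomorphism $W_0|_{\mathscr{X}_{Z'_{X'}}} \cong V|_{\mathscr{X}_{Z'_{X'}}}$ after possibly replacing $V, W_0$ by direct sums. Since $Z'$ is a finite (hence quasi-projective) $k$-scheme, Gabber's theorem gives $\mathrm{Br}(Z') = \mathrm{Br}'(Z')$, producing a nonzero $W_0$. The endgame of the proof of Theorem~\ref{T1tubular} then applies verbatim: by \cite[Corollary 3.1.4.4]{lieblich2008twisted} we may replace $V$ and $W_0$ by tensor products with appropriate projective modules so that $V^{\otimes n}$ and $W_0^{\otimes n}$ become trivial, and then take direct sums to equalize ranks. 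Setting $M := V \otimes V^\vee|_{\mathscr{X}_{Z'_{X'}}}$ and $N := W_0 \otimes V^\vee|_{\mathscr{X}_{Z'_{X'}}}$ gives untwisted vector bundles with $M^{\otimes n}$ and $N^{\otimes n}$ free, so $M^{\oplus m} \cong N^{\oplus m}$ is free by \cite[Proposition 3.1.4.3(4)]{lieblich2008twisted}, whence $V^{\oplus r} \cong V \otimes N^{\oplus m} \cong W_0 \otimes M^{\oplus m} \cong W_0^{\oplus r}$ on $\mathscr{X}_{Z'_{X'}}$. Combined with the Milnor-patched descent, this yields a nonzero twisted vector bundle on $\mathscr{X}$, so $\alpha \in \mathrm{Br}(X)$.

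The main obstacle is verifying that Ferrand's pinching and Milnor patching for locally free modules genuinely survive the gerbe base change along $\mathscr{X} \to X$, producing a locally free (and not merely quasi-coherent) twisted sheaf on $\mathscr{X}$. Once that compatibility is in hand, the rank-matching on $\mathscr{X}_{Z'_{X'}}$ is essentially the same computation that closes out the proof of Theorem~\ref{T1tubular}.
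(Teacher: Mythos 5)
Your route is genuinely different from the paper's. The paper does not touch the conductor square at all: it first reduces to $k=\bar{k}$ via Lemma~\ref{closure}, then glues along the \emph{flat Mayer--Vietoris} square formed by $U=X\setminus Z$ and the strict henselization $Z^{sh}\to X$ of the finite non-isomorphism locus, invoking \cite[Theorem B(1)]{hall2016mayer} to identify $\Qcoh(\ms X)$ with the fiber product of the three other categories. The point of passing to $Z^{sh}$ (with algebraically closed residue fields, so that $Z^{sh}\times_X X'$ is again a disjoint union of strictly henselian local schemes) is that every twisted vector bundle there is isomorphic to $\chi^{\oplus n}$ for the distinguished twisted line bundle $\chi$; hence the bundle $V'|_{Z^{sh}\times_X X'}$ is \emph{already} standard, the candidate bundle over $Z^{sh}$ is just $\chi^{\oplus n}$, and the gluing isomorphism over $Z^{sh}\times_X U$ comes for free --- no Gabber on the Artinian piece, no direct sums, and no rank-matching via Lieblich's propositions are needed. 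What the paper's approach buys is that the descent formalism is entirely outsourced to a theorem that is already stated for algebraic stacks and quasi-coherent sheaves, so it applies to the gerbes verbatim.

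The one substantive issue with your version is exactly the point you flag: Milnor patching for finitely generated projective modules along the conductor square is classical for rings, but you need its analogue for \emph{twisted locally free} sheaves on the gerbe square obtained by the (flat, non-representable-by-schemes) base change $\ms X\to X$, and you need the pushout/pinching statement in the category of algebraic spaces to begin with. Neither is established in the paper, and neither is an off-the-shelf citation in the form you need; until that is supplied the proof is incomplete, even though the statement is believable (it should follow by fppf descent from the affine Milnor square, since all categories in play are stacks for the fppf topology). Two smaller points: the appeal to \cite[Corollary 3.1.4.4]{lieblich2008twisted} over the non-affine $X'$ is not justified as stated, but it is also unnecessary --- your gluing locus $Z'_{X'}$ is Artinian, so the untwisted bundles $M=V\otimes V^{\vee}$ and $N=W_0\otimes V^{\vee}$ are free there of equal rank once you equalize the ranks of $V$ and $W_0$, and the isomorphism $V^{\oplus r}\cong W_0^{\oplus r}$ on $\ms X_{Z'_{X'}}$ follows directly. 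If you replace the twisted Milnor patching by the paper's flat Mayer--Vietoris square along $Z^{sh}\to X$, your argument collapses into the paper's proof.
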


\begin{proof} By lemma \ref{closure} we may assume that $k$ is algebraically closed. Denote by $U \subset X$ the maximal open subset where $X' \to X$ is not an isomorphism and let $Z=X\backslash U$ denote its (finite) complement and $Z^{sh}$ is strict henselization. Let $\ms X$ denote a $\mu_n$-gerbe on $X$ and $\ms X'=\ms X \times_X X'$. By assumption there is a nonzero twisted vector bundle $V'$ on $\ms X'$. Set $Z^{\circ}=Z^{sh} \times_X U$, $Z'=Z^{sh} \times_X X'$ and note that the latter is the disjoint union of strictly henselian rings by \cite[18.5.10]{EGAIV4} and the fact that the residue fields of $Z^{sh}$ are algebraically closed. Our goal is to find a twisted vector bundle $V_U$ over $\ms X_U$, $V_{Z^{sh}}$ over $\ms X_{Z^{sh}}$ and an isomorphism $\phi: V_{Z^{sh}} \to V_U$ over $\ms X_{Z^{\circ}}$. Indeed, such a triple uniquely defines a nonzero twisted vector bundle on $\ms X$ by \cite[Theorem B(1)]{hall2016mayer}. Since $V'$ is a twisted vector bundle on $\ms X'$ this means we have a twisted vector bundle $V_U=V'|_{\ms X' \times_{X'} U}$ over $\ms X_U=\ms X'\times_{X'} U$ since $X' \to X$ is an isomorphism over $U$. Moreover, we have a twisted vector bundle on $V_{Z'}$ over $Z'$ along with an isomorphism $\phi': V_{Z'}|_{\ms X' \times_{X'} Z^{\circ}} \to V_{U}|_{\ms X' \times_{X'} Z^{\circ}}$. Let $\chi$ be a twisted line bundle on $\ms X_{Z^{sh}}$ and $\chi'$ its restriction to $\ms X_{Z'}$ and note that $V_{Z'} \cong \chi'^{\oplus n}$. Indeed, $V_{Z'} \otimes \chi'^{\vee} \cong \mathcal{O}^{\oplus n}_{Z'}$ since it is untwisted and $Z'$ is the spectrum of a finite product of strictly henselian local rings. So if we combine this observation with the above isomorphism $\phi'$ and set $V_Z=\chi^{\oplus n}$ we get the desired isomorphism 
\[\phi: V_Z|_{\ms X \times_X Z^{\circ}} \to V_U|_{\ms X \times_X Z^{\circ}}\]
Thus there is a nonzero twisted vector bundle on $\ms X$ and the claim follows. \end{proof}

\subsection{A Twisted Elementary Transformation}

\begin{proof}[\textbf{Proof of Theorem \ref{T2}}] We may suppose that $k$ is algebraically closed. Now consider the Stein factorization of the morphism $Y \to X' \to X$ and observe that $X' \to X$ is a finite birational morphism which is an isomorphism away from finitely many points. Thus, if $\mathrm{Br}(X')=\mathrm{Br}'(X')$ we may conclude by Lemma \ref{finitedescent}. Thus, by replacing $X$ with $X'$ we may suppose that $f_*\mathcal{O}_Y=\mathcal{O}_X$. 

Let $\ms X$ denote a $\mu_n$-gerbe over $X$ and suppose $\ms X_Y$ denotes its pullback to $Y$. By Lemma \ref{infinitesimal} there is a infinitesimal neighborhood of $E_{\text{red}} \subset E_r$ containing no embedded associated primes with the additional property that if a twisted vector bundle $V$ on $\ms X_Y$ is isomorphic to $\chi^{\oplus k}|_{\ms X_{E_r}}$ when restricted to $\ms X_{E_r}$, then $f_*V$ is a twisted vector bundle on $\ms X$.

By a result of Gabber (see \cite{dejongample}) there is a twisted vector bundle $V$ on $\ms X_Y$, our aim is to modify $V$ so that it descends to a twisted vector bundle on $\ms X$ by Lemma \ref{twisteddescent}. First, replace $V$ with $V^{\oplus n}$ so that the determinant of $V$, call it $L'$, is the pullback of a line bundle on $Y$, let $l$ denote the rank of the new vector bundle $V$. By taking further direct sums of $V$ we may suppose $l$ is larger than the dimension of $X$.

Consider the twisted vector bundle $V|_{\ms X_{E_r}}$ and let $\chi|_{\ms X_{E_r}}$ denote the restriction of the twisted line bundle on $\ms X_{Z^{sh}}$ to $\ms X_{E_r}$. This twisted line bundle has the property that $\chi|_{\ms X_{E_r}}^{\otimes n}\cong \mathcal{O}_{E_r}$. Set $G=V|_{\ms X_{E_r}} \otimes \chi|_{\ms X_{E_r}}^{-1}$ and note that since $G$ is a vector bundle on $E_r$ with rank divisible by $n$
\[\det(G)=\det(V|_{\ms X_{E_r}} \otimes \chi^{-1}|_{\ms X_{E_r}})=\det(V|_{\ms X_{E_r}})=L'|_{E_r}\]
so the determinant of $G$ is the restriction of a line bundle on $Y$. Observe that the hypothesis of Lemma \ref{E-ample} is satisfied and so there exists an ample line bundle $L'' \in \Pic(Y)$ satisfying its conclusion. In other words, by replacing $V$ with $V \otimes L''$, Lemma \ref{E-ample} implies we may assume $G$ is globally generated, and whenever $\det(G) =\mathcal{O}_{E_r}(D)$ for some effective Cartier divisor $D \subset E_r$ not containing any associated point of $Y$, then there is an ample Cartier divisor $H \subset Y$ with $H \cap E_r=D$ scheme-theoretically. Note that $\det(G)$ is still the restriction of a line bundle $L \in \Pic(Y)$.

By Lemma \ref{trivialelementary} there is a finite Cartier divisor $D \subset E_r$ not containing any associated point of $Y$ and an exact sequence of sheaves on $E_r$:
\[0 \to \mathcal{O}_{E_r}^{\oplus l} \to G \to \mathcal{O}_D \to 0\]
Taking determinants we learn that $L|_{E_r}=\det(G)=\mathcal{O}_{E_r}(D)$ so that there exists an effective Cartier divisor $H \subset Y$ with $H \cap E_r=D$ scheme-theoretically. By twisting this exact sequence by $\chi|_{\ms X_{E_r}}$ we obtain an elementary transformation of $V|_{\ms X_{E_r}}$:
\[0 \to \chi|_{\ms X_{E_r}}^{\oplus l} \to V|_{\ms X_{E_r}} \to \chi|_{\ms X_{E_r}} \otimes \mathcal{O}_D \to 0\]
By summing these together, we obtain
\[0 \to \chi^{\oplus l^2} \to V|_{\ms X_{E_r}}^{\oplus l} \to (\chi|_{\ms X_{E_r}} \otimes \mathcal{O}_D)^{\oplus l} \to 0\]
Denote the corresponding surjection $V|_{\ms X_{E_r}}^{\oplus l} \to (\chi|_{\ms X_{E_r}} \otimes \mathcal{O}_D)^{\oplus l}$ by $\phi_D$ and observe that $\chi^{\oplus l^2}$ is an elementary transformation of $V|_{\ms X_{E_r}}^{\oplus l}$ along the Cartier divisor $D \subset E_r$. Our goal is to extend this elementary transformation of $V|_{\ms X_{E_r}}^{\oplus l}$ to one of $V^{\oplus l}$ along the Cartier divisor $H \subset Y$. More precisely, we produce a surjection 
\[\psi: V^{\oplus l} \to V|_{\ms X_H} \otimes L_1^{\otimes N}\]
for some ample line bundle $L_1 \in \Pic(H)$ so that the restriction, $\psi_D$, to $\ms X_{E_r}$ is compatible with $\phi_D$. By this we mean that there is an isomorphism $g: V|_{\ms X_D} \otimes L_1^{\otimes N}|_D \to (\chi|_{\ms X_D} \otimes \mathcal{O}_D)^{\oplus l}$ making the diagram commute

\begin{center} \begin{tikzcd} 
V|_{\ms X_{E_r}}^{\oplus l} \arrow{d}{\text{id}} \arrow{r}{\psi|_{\ms X_{E_r}}} & (V \otimes L_1^{\otimes N})|_{\ms X_D} \arrow{d}{g} \arrow{r} & 0 \\
  V|_{\ms X_{E_r}}^{\oplus l} \arrow{r}{\phi_D} & (\chi|_{\ms X_{E_r}} \otimes \mathcal{O}_D)^{\oplus l} \arrow{r} & 0 \\
\end{tikzcd} \end{center}

By Lemma \ref{Bertini} with $W_1=V^{\oplus l}|_{\ms X_H}$ and $W_2=V|_{\ms X_H}$ we may find a $\psi: V^{\oplus l} \to V|_{\ms X_H} \otimes L_1^{\otimes N}$ with the aforementioned properties. Now observe that this induces an exact sequence of twisted sheaves on $\ms X_Y$:
\[0 \to V' \to V^{\oplus l} \to V|_{\ms X_H} \otimes L_1^{\otimes N} \to 0\]
and note that $V'$ is locally free because $V|_{\ms X_H} \otimes L_1^{\otimes N}$ has projective dimension $1$. Indeed, $V|_{\ms X_H} \otimes L_1^{\otimes N}$ is a vector bundle on the Cartier divisor $\ms X_H \subset \ms X_Y$. We claim that $f_*V'$ is a twisted vector bundle on $\ms X$. Indeed, upon restriction to $\ms X_{E_r}$ the sequence remains exact (since $H$ and $E_r$ share no components and $E_r$ has no embedded associated points) and in fact, the isomorphism $g$ induces an isomorphism of short exact sequences:

\begin{center} \begin{tikzcd} 
0 \arrow{r} & V'|_{\ms X_{E_r}} \arrow{r} \arrow{d}{\cong} & V|_{\ms X_{E_r}}^{\oplus l} \arrow{d}{\text{id}} \arrow{r}{\psi|_{\ms X_{E_r}}} & (V \otimes L_1^{\otimes N})|_{\ms X_D} \arrow{d}{g} \arrow{r} & 0 \\
 0 \arrow{r} & \chi^{\oplus l^2}|_{\ms X_{E_r}} \arrow{r} & V|_{\ms X_{E_r}}^{\oplus l} \arrow{r}{\phi_D} & (\chi|_{\ms X_{E_r}} \otimes \mathcal{O}_D)^{\oplus l} \arrow{r} & 0 \\
\end{tikzcd} \end{center}
Since $V'|_{\ms X_{E_r}} \cong \chi|_{\ms X_R}^{\oplus l^2}$, Lemma \ref{twisteddescent} implies $f_*V'$ is a twisted vector bundle on $\ms X$.
 \end{proof}

\begin{proof}[\textbf{Proof of Corollary \ref{surfaces}}] We need to show that $X$ satisfies the hypothesis of Theorem \ref{T2}. By \cite[Tag 0ADD]{stacks} there is a schematic open subset $U \subset X$ containing all but finitely many closed points of $X$. Now by \cite[Theorem 1.5]{SurfaceRes} there is a quasi-projective open subset $V \subset U$ containing all but finitely many closed points of $X$. Then by Chow's lemma as in \cite[Tag 088R]{stacks}, there is a $V$-admissible blow-up $Y \to X$ with $Y$ quasi-projective over $k$. In particular, $Y \to X$ is a proper birational morphism which is an isomorphism away from finitely many points in $X$. Moreover, since $X$ is $2$-dimensional, the fibers of the morphism $Y \to X$ all have dimension $\leq 1$. The result follows. \end{proof}

\bibliography{mybib}{}
\bibliographystyle{plain}

\end{document}